\documentclass[smallextended]{svjour2}
\smartqed
\usepackage{mathptmx, amsmath, amsfonts, color,  epsfig,graphicx}
\journalname{Numerische Mathematik}

\begin{document}
\title{A hybridizable discontinuous Galerkin  method  for fractional diffusion problems}
\titlerunning{HDG for Fractional Diffusion Problems}
\author{Bernardo Cockburn \and Kassem Mustapha\thanks{The valuable comments of the editor and the referees improved the paper. The support of  the Science Technology Unit at KFUPM through  King Abdulaziz City for Science and
Technology (KACST) under National Science, Technology and Innovation Plan (NSTIP) project No. 13-MAT1847-04
 is gratefully acknowledged.}}
\institute{Bernardo Cockburn \at School of Mathematics, University of Minnesota, USA ({\tt cockburn@math.umn.edu}). \and Kassem Mustapha \at
Department of Mathematics and Statistics, King Fahd University of Petroleum and Minerals (KFUPM), Saudi Arabia ({\tt kassem@kfupm.edu.sa}).}

\date{\today}
\maketitle
\begin{abstract}
We study the use of the hybridizable discontinuous Galerkin (HDG) method for numerically solving fractional diffusion equations of order
$-\alpha$ with  $-1<\alpha<0$. For exact time-marching, we derive optimal algebraic error estimates  {assuming} that the exact solution is
sufficiently regular. Thus, if for each time $t \in [0,T]$ the approximations are taken to be piecewise polynomials of degree $k\ge0$ on the
 spatial domain~$\Omega$, the approximations to  $u$ in the $L_\infty\bigr(0,T;L_2(\Omega)\bigr)$-norm   and to $\nabla u$
 in the $L_\infty\bigr(0,T;{\bf L}_2(\Omega)\bigr)$-norm are
 proven to converge with
the rate $h^{k+1}$, where $h$ is the maximum diameter of the elements of the mesh.  Moreover, for $k\ge1$ and quasi-uniform meshes, we obtain a
superconvergence result which allows us to compute, in an elementwise manner, a new approximation for $u$ converging with a rate of
$\sqrt{\log(T h^{-2/(\alpha+1)})}\, \,h^{k+2}$.
 \keywords{ Anomalous diffusion, sub-diffusion,  discontinuous Galerkin methods, hybridization, convergence analysis, superconvergence
}
\end{abstract}

\newcommand{\Red}[1]{{\color{red}{#1}}}
%
%

%
%
\newcommand{\B}{\mathcal{B}}
\newcommand{\I}{\mathcal{I}}
%
\newcommand{\marg}[1]{\marginpar{\tiny{\framebox{\parbox{1.7cm}{#1}}}}}

\newcommand{\Hdiv}[1]{{\bf{H}}(\mathrm{div},{#1})}
\def\d{\partial}
\newcommand{\RRR}{\mathbb{R}}

\newcommand{\CP}{C_{\!\scriptscriptstyle{\pr}}}
\newcommand{\CR}{C_{\!\mathrm{reg}}}

\newcommand{\trF}{\jb{\mathop{\gamma_F}}}
\newcommand{\tr}[1]{\mathop{\mathrm{tr}_{#1}}}
\newcommand{\trnorm}[1]{{|\!|\!|\,{#1}\,|\!|\!|}}
\newcommand{\E}{\mathcal{E}}
\newcommand{\veps}{\varepsilon}
\newcommand{\eq}{\boldsymbol{\varepsilon}_h^{\;q}}
\newcommand{\eqhat}{\boldsymbol{\varepsilon}_h^{\widehat{q}}}
\newcommand{\eu}{\varepsilon_h^u}
\newcommand{\esig}{\B_{\alpha}\eq}
\newcommand{\el}{\varepsilon_h^{\widehat u}}
\newcommand{\vPhi}{\boldsymbol{\varPhi}}
\newcommand{\vTheta}{\varTheta}
\newcommand{\vPsi}{\varPsi}
\newcommand{\Poly}{\mathbb{P}}
\newcommand{\taum}{\tau_K^{\mathrm{max}}}
\newcommand{\tm}{\tau^{\mathrm{max}}}
\newcommand{\diam}{\ensuremath{\mathop\mathrm{diam}}}
\newcommand{\meas}{\ensuremath{\mathop\mathrm{meas}}}
\newcommand{\Eh}{\mathcal{E}_h}
\newcommand{\oh}{{\mathcal{T}_h}}
\newcommand{\doh}{{\partial \oh}}
\newcommand{\Th}{{\oh}}
\newcommand{\qhat}{\widehat{{\bf q}}}
\newcommand{\sighat}{\B_\alpha\widehat{{\bf q}}}
\newcommand{\uhat}{\widehat{u}}
\newcommand{\Uhat}{\widehat{U}}
\newcommand{\eh}{\boldsymbol{\veps}^{\qhat}_h}
\newcommand{\esigh}{\B_{\alpha}\boldsymbol{\veps}^{\widehat {{\bf q}}_h}}
\newcommand{\ip}[1]{\langle {#1} \rangle}
\newcommand{\Forall}{\quad\text{for all }}
\newcommand{\pr}{\varPi}
\newcommand{\pv}{\boldsymbol{\varPi}_{\!\scriptscriptstyle{V}}}
\newcommand{\pw}{\varPi_{\!\scriptscriptstyle{W}}}
\newcommand{\Pw}{P_{\scriptscriptstyle{W}}}
\newcommand{\Pm}{P_{\scriptscriptstyle{M}}}
\newcommand{\vdelta}{{\boldsymbol{\delta}}}
\newcommand{\ve}{{\boldsymbol{e}}}
\newcommand{\vp}{{{\bf P}}}
\newcommand{\vP}{{{\bf P}}}
\newcommand{\vq}{{{\bf q}}}
\newcommand{\vQ}{{{\bf q}}}
\newcommand{\vsig}{\B_{\alpha}{{\bf q}}}
\newcommand{\vSig}{{\boldsymbol{\Sigma}}}
\newcommand{\vr}{{\bf {r}}}
\newcommand{\vz}{{\bf {z}}}
\newcommand{\vn}{{\bf {n}}}
\newcommand{\vv}{{\bf {v}}}
\newcommand{\vw}{{\bf {w}}}
\newcommand{\dive}{\mathop{\nabla}\cdot\,}
\newcommand{\pol}{\mathcal{P}}
\newcommand{\bpol}{\boldsymbol{\mathcal{P}}}
\section{Introduction}
In this paper, we propose and analyze a numerical method using exact integration in time and the so-called HDG method for the spatial
discretization of the following anomalous, slow diffusion (sub-diffusion) model problem:
\begin{subequations}
\label{heatequation}
\begin{alignat}{2}\label{heatequation-a}
&u_t-\B_\alpha \Delta u = f &&\quad\mbox{ in }\Omega\times (0,T],
\\
&u= g &&\quad\mbox{ on }\partial\Omega\times (0,T],
\\  \label{heatequation-b}
&u|_{t=0}=u_0 &&\quad\mbox{ on }\Omega,
\end{alignat}
\end{subequations}
{where $\Omega$ is a convex polyhedral domain of $\mathbb{R}^d$, where $d=1,2,3$.} Here, $\B_\alpha$ is the Riemann--Liouville fractional
derivative in time defined, for $-1<\alpha<0$, by
\begin{equation}
\label{Balpha} \B_\alpha v(t):=\frac{\partial }{\partial t}\int_0^t\omega_{\alpha+1}(t-s)v(s)\,ds\quad\text{with} \quad
\omega_{\alpha+1}(t):=\frac{t^\alpha}{\Gamma(\alpha+1)}
\end{equation}
where  $\Gamma$ denotes the usual gamma function. One may show that $\B_\alpha v\to v$ as $\alpha \to 0$. So,  in the limiting case~$\alpha=0$,
the problem \eqref{heatequation} becomes nothing but an initial-boundary value probem for the classical heat equation.

Problems of the form \eqref{heatequation} arise in a variety of physical, biological and chemical applications
\cite{kilbas,Mathai,MetzlerKlafter2000,MetzlerKlafter2004,podlubny,SmithMorrison,Tarasov}. They describe slow or anomalous sub-diffusion and
occur, for example, in models of fractured or porous media, where the particle flux depends on the entire history of the density gradient,
$\nabla u$. It is thus important to devise, efficient methods for numerically solving them.

Let us briefly review the development of numerical methods for the fractional sub-diffusion problem~\eqref{heatequation}. Several authors have
proposed a variety of numerical methods for this problem. For finite difference (FD) methods with convergence rates of order $O(h^2)$ in space,
where $h$ is the maximum meshsize, see, for example,
\cite{ChenLiuAnhTurner2011,ChenLiuTurnerAnh2010,LanglandsHenry2005,LiuYangBurrage2009,MustaphaAlMutawa,Yuste2006,YusteAcedo2005,ZhuangLiuAnhTurner2008,ZhuangLiuAnhTurner2009}.
In \cite{Cui2009}, FD schemes were considered which are first-order accurate in time but $O(h^4)$-accurate in space provided $u$ is sufficiently
smooth including at $t=0.$ In \cite{Mustapha2011}, the second author studied a FD method in time combined with spatial piecewise linear finite
elements scheme.
 In~\cite{McLeanMustapha2009,MustaphaMcLean2011,MustaphaMcLean2013},
a piecewise-constant and a piecewise-linear, discontinuous Galerkin (DG)
 and a postprocessed DG  time-stepping methods combined with piecewise-linear
 finite elements for the spatial discretization were  analyzed. Full convergence  results were
provided for variable time steps employed to compensate the lack of regularity of the exact solution near $t=0$. A FD method and convolution
quadrature had been studied in \cite{CuestaLubichPalencia2006,SchaedleLopezFernandezLubich2006}. Another type of scheme involving Laplace
transformation combined with a quadrature along a contour in the complex plane, provides spectral accuracy for the time discretization, but
appears to offer little scope for handling nonlinear versions of~\eqref{heatequation}, see
\cite{LopezFernandezPalenciaSchaedle2006,McLeanThomee2010L}.

Furthermore, various numerical methods have been applied for the following  alternative representation of  the fractional sub-diffusion equation
(\ref{heatequation-a}): {
\[
\int_0^t\omega_{-\alpha}(t-s)u_t(s)\,ds - \Delta u(t) = \mathsf{f}(t)
\quad\mbox{ in }\Omega\times (0,T],
\]
see \cite{Cui2012,GaoSun2011,JinLazarovZhou2012,Quintana-MurilloYuste2011,ZhangSun2011} and the references therein. The two representations are
equivalent under reasonable assumptions on the initial data, see \cite{YusteQuintana2009}, but the  methods obtained for each representation are
formally different.

Here, we continue the above-described effort and propose and analyze a method using exact integration in time and the HDG method for the space
discretization for problem \eqref{heatequation}.
 The choice of the HDG methods for the problem under consideration
can be easily justified. Indeed, the HDG methods are a relatively new class of { DG} methods introduced in
\cite{CockburnGopalakrishnanLazarov09} in the framework of steady-sate diffusion which share with the classical (hybridized version of the)
mixed  finite element methods their remarkable convergence properties,
\cite{CockburnGopalakrishnanSayas09,CockburnQiuShi,CockburnQiuShiCurved}, as well as the way in which they can be efficiently implemented,
\cite{KirbySherwinCockburn}. They provide approximations that are more accurate than the ones given by any other DG method for second-order
elliptic problems \cite{NguyenPeraireCockburnIcosahom09}.

Here we  prove that, for each time $t \in [0,T]$, the error of the HDG approximation to the solution $u$ of \eqref{heatequation} in the
$L_\infty\bigr(0,T;{L}_2(\Omega)\bigr)$-norm and to the flux ${\bf q}:=-\nabla u$  in the $L_\infty\bigr(0,T;{\bf L}_2(\Omega)\bigr)$-norm
converge with order  $h^{k+1}$ where $k$ is the polynomial degree; see Theorem \ref{thm:ee}. We also show that a suitably defined {\em
projection} of the error in $u$ {\em
  superconverges}  with order
$h^{k+2}$ whenever $k\ge1$. This allows us to obtain, by a simple elementwise postprocessing, another approximation to $u$ converging in the
$L_\infty \bigr(0,T;L_2(\Omega)\bigr)$-norm with a rate of  $\sqrt{\log (T/h^{2/(\alpha+1)}) }h^{k+2}$ for quasi-uniform meshes and whenever
$k\ge1$; see Theorem \ref{thm:super}. We thus obtain a much better approximation at a cost which is negligible in comparison with that of
obtaining the approximate solution.  These convergence results extend those obtained in \cite{ChabaudCockburn12} for the heat equation, that is
for the case $\alpha=0$, and hold uniformly for any  $-1< \alpha\le 0$.  Our error analysis extends the approach used in
\cite{ChabaudCockburn12} for the heat equation. We make the full use of several important properties of the fractional derivative operator
$\B_\alpha$; see Lemma \ref{appendix}. In particular, especial care has to be used in the proof of the uniformity-in-time of the above-mentioned
superconvergence property, as new, delicate regularity estimates are required by the use of a fractional duality argument.

 Outline of the paper. In the next section, we
define the  HDG method.  In Section~\ref{sec: errors}, we prove the main
convergence result,  Theorem \ref{thm:ee}. Particularly relevant to this a priori error analysis is the derivation of several important
properties of the fractional order operator $\B_\alpha$, which we gather in Lemma \ref{appendix}. In Section~\ref{sec:super}, we prove the
superconvergence result, Theorem \ref{thm:super}.
Finally, in Section \ref{sec:extensions}, we comment on the extension of this work to other methods
 fitting the general formulation of the HDG methods; see \cite{CockburnQiuShi}.

\section{The HDG method}\label{sec: HDG scheme}
 We begin this section  by discretizing the domain $\Omega$ by a
 triangulation $\oh$ (made of simplexes $K$) which we take to be conforming for the sake of simplicity. We denote by $\partial
 \oh$ the set of all the boundaries $\partial K$ of the elements $K$ of $\oh$.  We denote by $\Eh$ the union of faces $F$ of the simplexes $K$ of the triangulation
$\oh$.

Next, we introduce the discontinuous finite element spaces:
\begin{subequations}
\begin{alignat}{3}
  \label{eq:W}
  {W_h} &= \{ {w \in L^2(\Omega)}&&{: w|_K \in \pol_k(K)}&&\;\;\;{\forall\;K\in\oh}\},
  \\
  \label{eq:V}
  {\bf V}_h &=  \{ \vv\in {\bf L}^2(\Omega):=[L_2(\Omega)]^d&&{:\; \vv|_K \in \bpol_k(K)}&&\;\;\;{\forall\; K\in\oh}\},
  \\
  \label{eq:M}
  M_h &= \{ \mu\in L^2(\Eh)&&: \;\mu|_F \in
  \pol_k(F)&&\;\;\;\forall\; F\in \Eh\},
\end{alignat}
\end{subequations}
where  $\bpol_k(K):=[\pol_k(K)]^d$ (the space of
  vector-valued functions whose entries lie on $\pol_k(K)$).
Here,  $\pol_k(D)$ is the space of polynomials of total degree $\le k$ on any spatial domain $D$.

To describe our HDG scheme, we rewrite (\ref{heatequation-a}) as a first order system as follows}: $\vq +\nabla u = 0$, $u_t+\dive \vsig = f$ in
$\Omega\times (0,T].$ So, the exact solution  satisfies:
\begin{subequations}
\label{eq: weak exact solution}
  \begin{alignat}{2}\label{eq: weak exact solution1}
    ( \vq,{\bf \phi}) -
    (  u,\dive {\bf \phi}) +   \ip{ u,{\bf \phi}\cdot\vn}
    &  = 0 && \quad \forall {\bf \phi}\in \Pi_{K\in\mathcal{T}_h}{ \Hdiv{K}},
    \\\label{eq: weak exact solution3}
     (u_t,\chi) -(   \vsig, \nabla \chi) +
    \ip{  \vsig\cdot\vn, \chi}
    & =
    (f, \chi)&&\quad \forall \chi \in \Pi_{K\in\mathcal{T}_h} H^1(K)\,.\end{alignat}
\end{subequations}
for $t \in (0,T]$, where  $(v,w):=\sum_{K \in \Th}(v,w)_K$ and $\ip{v,w} := \sum_{K \in \Th} \ip{ v,w}_{\d K}$. We  write, for any domain $D$ in
$\RRR^d$, $(u,v)_D :=\int_D u v \; dx$, and $\ip{u,v}_{\d D}:=\int_{\d D} u ,v\;d\gamma$. For vector-valued functions $\vv$ and $\vw$, the
notation is similarly defined with the integrand being the dot product $\vv \cdot \vw$.

The HDG method {provides} a scalar approximation $u_h(t) \in W_h$ to $u(t)$,
 a vector-valued approximation  $\vq_h(t) \in  {\bf V}_h$  to the flux  $\vq(t)$,
and {a scalar approximation $\uhat_h(t)\in M_h$ to the {trace} of $u(t)$ {on} element {boundaries}} for each time $t \in [0,T],$  which are
determined by requiring that the equations
\begin{subequations}
\label{method}
\begin{alignat}{2}
\label{eq:method-a}
 ( \vq_h,\vr) -
 ( u_h,\dive \vr) +   \ip{\uhat_h,\vr\cdot\vn}
 &  = 0,
 \\
 \label{eq:method-b}
(\partial_t u_h, w) -( \vsig_h, \nabla w) +
 \ip{ \B_{\alpha}\widehat{{\bf q}}_h\cdot\vn, w}
 & =
 (f, w),
\\
\label{eq:method-d}
 \ip{\uhat_h,\mu }_{\d\Omega}
& = \ip{g, \mu }_{\d\Omega},
\\
\label{eq:method-c}
 \ip{\B_{\alpha}\widehat{{\bf q}}_h \cdot \vn,\mu }
   -\ip{\B_{\alpha}\widehat{{\bf q}}_h \cdot \vn,\mu }_{\d\Omega} & = 0,
\\
\label{eq:method-e}
 u_h|_{t=0} & = \pw u_0,
\end{alignat}
hold  for all $\vr\, {\in} {\bf V}_h,$ $ w \in W_h,$ and $ \mu \in M_h.$  Here, $\partial_t u_h$ is nothing but the partial derivative of
  $u_h$ with respect to time. We
use the notation  $(v,w)_{\oh}:=\sum_{K \in \Th}(v,w)_K$
and $\ip{v,w}_{\doh} := \sum_{K \in \Th} \ip{ v,w}_{\d K}$, and take the numerical trace for the flux as
\begin{alignat}{2}
\label{HDGtrace}  \qhat_h &= \vq_h + \tau \, \big( u_h - \uhat_h \big) \vn  &&\quad \text{ on } \doh,
\end{alignat}
\end{subequations}
for some nonnegative stabilization function $\tau$ defined on $\doh$; we assume that, for each element $K\in\oh$, $\tau|_{\partial K}$ is
constant on each of its faces. How to choose this stabilization
  function in order to achieve optimal convergence properties is dicussed later. Note that the first two equations are inspired in the weak form  satisfied by the exact solution, \eqref{eq: weak
exact solution}. The operator $\pw$ is the one introduced in \cite{CockburnGopalakrishnanSayas09} and will be defined later.

Let us briefly describe the feature of the HDG method which renders it efficiently implementable. Note that the form of the numerical trace
given by \eqref{eq:method-c} allows us to express $(u_h,\vq_h,\widehat{{\bf q}}_h)$ elementwise in terms of $\uhat_h$, $f$ and  $u_0$ by using
equations \eqref{eq:method-a}, \eqref{eq:method-b}, \eqref{HDGtrace} and \eqref{eq:method-e}. Then, $\uhat_h$ is determined by as the solution
of the transmission condition \eqref{eq:method-c}, which enforces the single-valuedness of the normal component of the numerical trace
$\B_{\alpha}\widehat{{\bf q}}_h$, and the boundary condition \eqref{eq:method-d}. Thus, the only globally-coupled degrees of freedom are those
of the numerical trace $\uhat_h$.

Let us end this subsection by noting that the existence and
  uniqueness of the approximation provided by the HDG method just introduced
follows from the corresponding results for linear systems of fractional
differential equations. In particular, see \cite{kilbas} in page 139 the result
for the Cauchy-problem
for the linear system (3.1.29).

\section{Properties of the operator $\B_\alpha$}\label{sec:Balpha}
We begin the analysis by collecting several crucial properties of the operator $\B_\alpha$. They involve the adjoint operators $\B_\alpha^*$ and
$\I^*_{-\alpha}$ of $\B_\alpha$ and $\I_{-\alpha}$, respectively,
 where  $\I_{-\alpha}$ is the Riemann--Liouville fractional integral;
 \[ \I_{-\alpha} v(t)=\int_0^t\omega_{-\alpha}(t-s) v(s)\,ds
    \quad\text{for $-1<\alpha<0$.}
\]
As we pointed out in the Introduction, these properties are essential for the analysis because they allow us to extend the approach used for the
error analysis of the HDG method applied to the heat equation considered in \cite{ChabaudCockburn12}.

For convenience, we introduce the following notation. Starting from the definition of the adjoint operators $\B_\alpha^*$ and $\I_{-\alpha}^*$,
\begin{subequations}
\label{eq: adjoint}
\begin{alignat}{1}
\label{eq: adjointB} \int_0^T v(t)\,\B_\alpha w(t)\,dt
    &=\int_0^T \B_\alpha^*v(t)\, w(t)\,dt,
\\
\label{eq: adjointI} \int_0^T v(t)\,\I_{-\alpha} w(t)\,dt
    &=\int_0^T \I_{-\alpha}^*v(t)\, w(t)\,dt,
\end{alignat}\end{subequations}
one can show that for { $\alpha\in (-1,0)$ and $t\in (0,T]$, see  \cite[Lemma 3.1]{MustaphaMcLean2013}, that
\begin{subequations}
\begin{alignat}{2}
\label{Balpha*} \B_\alpha^*v(t)&=-\frac{\partial}{\partial t}\int_t^T
    \omega_{1+\alpha}(s-t)\,v(s)\,ds
&&\quad\text{for~any $v\in  \mathcal C^1(0,T)$},
\\
\label{Ialpha*} \I^*_{-\alpha} v (t)&=\;\int_t^T\omega_{-\alpha}(s-t)\,v(s)\,ds &&\quad\text{for~any $v\in \mathcal C^0(0,T)$}\,.
\end{alignat}
\end{subequations}
Moreover, since
 \begin{alignat*}{1}
 \B^*_\alpha\I^*_{-\alpha}v(t)
 &=-\frac{\partial}{\partial t}\int_t^T
    \omega_{1+\alpha}(s-t) \int_s^T\omega_{-\alpha}(q-s)\,v(q)\,dq\,ds\\
&=-\frac{\partial}{\partial t}\int_t^Tv(q)\int_t^q
    \omega_{1+\alpha}(s-t)\,\omega_{-\alpha}(q-s)\,ds\,dq,
    \end{alignat*}
and since  $\int_t^q \omega_{1+\alpha}(s-t)\,\omega_{-\alpha}(q-s)\,ds=1$,
it is easy to see that $\I_{-\alpha}^*$ is the {\it right-inverse} of $\B^*_\alpha$, that is,
\begin{equation}\label{eq: right-inverse}
\B^*_\alpha\I^*_{-\alpha}v = v.
\end{equation}}

 We gather in the following result several key
properties we use in our analysis. They are expressed by using a notation we introduce next. First, we set
\[
| v|^2_{\beta,\tilde t }:= \int_0^{\tilde t} v\,\B_{\beta}v\,dt \quad\mbox{ if }\beta\in (-1,0] ~~{\rm and}~~
 | v|^2_{\beta,\tilde t }:= \int_0^{\tilde
t}v\,\I_{\beta}v\,dt \quad\mbox{ if }\beta\in [0,1).
\]
and use the standard notation of the seminorm $|\cdot|$ because, as we are going to see, the two right-hand sides are actually nonnegative. The
term $\int_0^{\tilde t} v(t)\,\I_{\beta}v(t)\,dt$ is nonnegative if $v\in L_2(0,\tilde t)$. The term $\int_0^{\tilde t}
v(t)\,\B_{\beta}v(t)\,dt$ is nonnegative when $v$ is in $\mathcal C^1(0,\tilde{t})$, or, alternatively, when $v$ and $\B_\alpha v$ are $\mathcal
C^0(0,\tilde t);$ see \cite[Equation 6]{McLeanMustapha2009}.

 Finally, for a  given function $v$ defined on $[0,\tilde t]\times \oh$, we set
\[
\| v\|^2_{\beta,\tilde t}:=
\begin{cases}
\int_0^{\tilde t}(\B_{\beta}v,v)\,dt &\quad\mbox{ if }\beta\in (-1,0],
\\
\int_0^{\tilde t}(\I_{\beta}v,v)\,dt &\quad\mbox{ if }\beta\in [0,1).
\end{cases}\]
For  functions defined on $[0,\tilde t]\times \doh$, we replace $\| \cdot \|$ and $(\cdot,\cdot)$ with $\trnorm{\cdot}$ and $\ip{\cdot,\cdot}$,
respectively. Note that, we drop out $\tilde t$   from the above definitions when $\tilde t = T$.

{ \begin{lemma} \label{appendix} Let $c_\alpha =\frac{ \cos( \alpha\pi/2)}{\pi^\alpha}\frac{|\alpha|^{-\alpha}}{(1-\alpha)^{1-\alpha}}$ and $
d_\alpha={1}/{\cos(\alpha \pi/2)}\,.$ Then,
 for any $v,\,w  \in \mathcal C^1(0,T)$ and any $\alpha\in (-1,0)$, we have
\begin{itemize}
\item[{\rm (i)}]
$|v|_\alpha^2 \ge c_\alpha T^\alpha \int_0^T v^2(t)\,dt,$
\item[{\rm (ii)}]
$ \int_0^T v(t)\,w(t)\,dt
    \le { d_\alpha} |v|_\alpha\,|w|_{-\alpha}, $
\item[{\rm (iii)}]
$ \int_0^Tv(t)\,\B_\alpha w(t)\,dt
    \le {d_\alpha}|v|_\alpha\,|w|_\alpha,
$
\item[{\rm (iv)}]
$ \int_0^T \I_{-\alpha}v(t)\,w(t)\,dt
    \le { d_\alpha} |v|_{-\alpha}\,|w|_{-\alpha},\quad{\rm for~any}~~ v,\,w  \in \mathcal C^0(0,T)
$
\item[{\rm (v)}]
$ \lim_{t\downarrow 0} \omega^{-1}_{\alpha+2}(t) \int_{0}^tv(s)\B_\alpha v (s)\,ds =v^2(0).$
\end{itemize}
\end{lemma}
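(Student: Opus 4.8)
The unifying tool I would use is the Plancherel representation of the seminorms. Extending $v$ by zero outside $[0,T]$ and writing $\hat v(\xi)=\int_{\RRR} v(t)e^{-i\xi t}\,dt$, the causal kernels give the Fourier symbols $\widehat{\B_\alpha v}=(i\xi)^{-\alpha}\hat v$ and $\widehat{\I_{-\alpha}v}=(i\xi)^{\alpha}\hat v$, where $(i\xi)^{\mp\alpha}=|\xi|^{\mp\alpha}e^{\mp i(\alpha\pi/2)\mathrm{sgn}\,\xi}$ is the principal branch. Since $\int_0^T v\,\B_\alpha v\,dt=\int_{\RRR} v\,\B_\alpha v\,dt$ (the integrand vanishes for $t<0$ and $v$ vanishes for $t>T$), Plancherel followed by taking real parts yields the key identities
\begin{equation*}
|v|_\alpha^2=\frac{\cos(\alpha\pi/2)}{2\pi}\int_{\RRR}|\xi|^{-\alpha}|\hat v(\xi)|^2\,d\xi, \qquad |v|_{-\alpha}^2=\frac{\cos(\alpha\pi/2)}{2\pi}\int_{\RRR}|\xi|^{\alpha}|\hat v(\xi)|^2\,d\xi,
\end{equation*}
both manifestly nonnegative since $\cos(\alpha\pi/2)>0$ for $\alpha\in(-1,0)$. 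Convergence of the integrals near $\xi=0$ and $\xi=\infty$ is guaranteed by $-1<\alpha<0$ together with the $1/\xi$ decay of $\hat v$ coming from the jump of the zero-extension at $t=0$; making the symbol identity rigorous for merely $\mathcal C^1$ data, e.g. by first treating smooth compactly supported $v$ and passing to the limit, is the one technical point to dispatch at the outset.

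Granting this representation, parts (ii)--(iv) are all instances of a weighted Cauchy--Schwarz inequality. For (iii) I would write $\int_0^T v\,\B_\alpha w\,dt=\mathrm{Re}\,\tfrac1{2\pi}\int_{\RRR}(i\xi)^{-\alpha}\overline{\hat v}\,\hat w\,d\xi$, bound $|(i\xi)^{-\alpha}|=|\xi|^{-\alpha}$, and apply Cauchy--Schwarz in $L_2(\RRR,|\xi|^{-\alpha}d\xi)$; the two factors are exactly $|v|_\alpha/\sqrt{\cos(\alpha\pi/2)}$ and $|w|_\alpha/\sqrt{\cos(\alpha\pi/2)}$, so the constant $d_\alpha=1/\cos(\alpha\pi/2)$ appears automatically. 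Part (ii) splits the trivial weight as $1=|\xi|^{-\alpha/2}|\xi|^{\alpha/2}$ and pairs $|\xi|^{-\alpha/2}\hat v$ with $|\xi|^{\alpha/2}\hat w$, while (iv) is identical with the weight $|\xi|^{\alpha}$. Alternatively, (ii) and (iv) follow purely algebraically from (iii) using the right-inverse relation $\B_\alpha\I_{-\alpha}=\mathrm{id}$ (the primal analogue of \eqref{eq: right-inverse}) together with the isometry $|\I_{-\alpha}w|_\alpha=|w|_{-\alpha}$, which comes from $|\I_{-\alpha}w|_\alpha^2=\int_0^T \I_{-\alpha}w\,\B_\alpha\I_{-\alpha}w\,dt=\int_0^T \I_{-\alpha}w\,w\,dt=|w|_{-\alpha}^2$.

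The genuinely delicate part is the coercivity (i), because $|\xi|^{-\alpha}$ is not bounded below away from $\xi=0$; the missing ingredient is that $v$ is supported in $[0,T]$, which prevents $\hat v$ from concentrating at the origin. I would split the frequency integral at a threshold $R$: on $|\xi|\ge R$ use $|\xi|^{-\alpha}\ge R^{-\alpha}$, and on $|\xi|<R$ estimate $|\hat v(\xi)|\le\int_0^T|v|\le\sqrt T\,\|v\|_{L_2(0,T)}$, so that $\int_{|\xi|<R}|\hat v|^2\le (RT/\pi)\int_{\RRR}|\hat v|^2$. This gives $\int_{\RRR}|\xi|^{-\alpha}|\hat v|^2\ge R^{-\alpha}\bigl(1-RT/\pi\bigr)\int_{\RRR}|\hat v|^2$ for $R<\pi/T$, and maximizing the explicit function $R\mapsto R^{-\alpha}(1-RT/\pi)$ at $R^\ast=|\alpha|\pi/\bigl(T(1-\alpha)\bigr)$ produces precisely the factor $c_\alpha T^\alpha$ once one uses $\int_0^T v^2\,dt=\tfrac1{2\pi}\int_{\RRR}|\hat v|^2$. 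Checking that this optimization reproduces the stated $c_\alpha=\cos(\alpha\pi/2)\,\pi^{-\alpha}|\alpha|^{-\alpha}(1-\alpha)^{-(1-\alpha)}$, and that $R^\ast<\pi/T$, is the main computation and the heart of the lemma.

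Finally, (v) is a small-time asymptotic that is independent of the Fourier machinery. Writing $w=v-v(0)$, one has $\B_\alpha v(s)=v(0)\,\omega_{\alpha+1}(s)+\B_\alpha w(s)$ because $\B_\alpha$ applied to a constant equals that constant times $\omega_{\alpha+1}$; and since $w(0)=0$ with $w\in\mathcal C^1$, the identity $\B_\alpha w(s)=\int_0^s\omega_{\alpha+1}(s-r)w'(r)\,dr$ gives $|\B_\alpha w(s)|\le\|v'\|_\infty\,\omega_{\alpha+2}(s)=O(s^{\alpha+1})$. Hence $v(s)\B_\alpha v(s)=v^2(0)\,\omega_{\alpha+1}(s)+O(s^{\alpha+1})$; integrating and using $\int_0^t\omega_{\alpha+1}(s)\,ds=\omega_{\alpha+2}(t)$ yields $\int_0^t v\,\B_\alpha v\,ds=v^2(0)\,\omega_{\alpha+2}(t)+O(t^{\alpha+2})$, so multiplying by $\omega_{\alpha+2}^{-1}(t)$ and letting $t\downarrow0$ gives the claimed limit $v^2(0)$.
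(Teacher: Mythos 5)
Your proposal is correct, and it is genuinely more self-contained than what the paper does. The paper's proof of this lemma is largely by citation: property (i) is quoted from McLean (Theorem A.1 of the reference \cite{McLean2012}, proved there by Laplace transform and Plancherel), property (ii) from Lemma 3.1 of \cite{MustaphaSchoetzau2013}, and then (iii) and (iv) are obtained from (ii) together with the right-inverse identity $\B_\alpha^*\I_{-\alpha}^*v=v$. Your Plancherel representation of the seminorms, the weighted Cauchy--Schwarz derivation of (ii)--(iv), and in particular the frequency-splitting argument for (i) --- splitting at $R$, bounding $|\hat v(\xi)|\le\sqrt{T}\,\|v\|_{L_2(0,T)}$ on $|\xi|<R$, and optimizing $R\mapsto R^{-\alpha}(1-RT/\pi)$ at $R^*=|\alpha|\pi/(T(1-\alpha))$ --- reconstructs exactly the content of those cited proofs, and your optimization does reproduce the stated $c_\alpha$ (since $1-\alpha-|\alpha|=1$). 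Your alternative algebraic reduction runs in the opposite direction from the paper's (you get (ii) and (iv) from (iii) via the isometry $|\I_{-\alpha}w|_\alpha=|w|_{-\alpha}$, the paper gets (iii) and (iv) from (ii)); both are legitimate. For (v) your route is different from the paper's: you decompose $v=v(0)+w$ with $w(0)=0$, so that $\B_\alpha v=v(0)\,\omega_{\alpha+1}+\B_\alpha w$ with $\B_\alpha w=O(s^{\alpha+1})$, and integrate; the paper instead applies the integral mean value theorem to $\omega_{\alpha+2}^{-1}(t)\int_0^t\omega_{\alpha+1}(s)\,[v(s)\,\omega_{\alpha+1}^{-1}(s)\B_\alpha v(s)]\,ds$ and then expands $\B_\alpha v(t^*)$. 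The two arguments rest on the same computation of $\B_\alpha$ applied to a $\mathcal C^1$ function vanishing at the origin, and yours is arguably the more transparent. The one point you correctly flag but should not leave entirely undischarged is the justification of the symbol identity for the zero extension: when $\alpha\le-1/2$ and $v(0)\ne0$, $\B_\alpha v\sim v(0)\,\omega_{\alpha+1}(t)$ fails to be square-integrable near $t=0$, so the Parseval pairing must be interpreted as a quadratic form (or obtained by density/regularization) rather than as a naive $L_2$ inner product; this is precisely the technical care taken in the references the paper leans on.
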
}
\begin{proof}
The coercivity property (i) was proven in \cite[Theorem A.1]{McLean2012} by using the Laplace transform and Plancherel Theorem. Using a similar
technique { and  the fact that $\I_{-\alpha}^*$ is the right-inverse of $\B^*_\alpha$, see \eqref{eq: right-inverse}}, property (ii) can also be
obtained, see \cite[Lemma 3.1]{MustaphaSchoetzau2013}. Properties (iii) and (iv) easily follow from property (ii) and { again} from the fact
that $\I_{-\alpha}^*$ is the right-inverse of $\B^*_\alpha$.

It remains to prove property (v). We have, for small enough $t>0$, that
\begin{alignat*}{1}
&\omega^{-1}_{\alpha+2}(t) \int_{0}^tv(s)\B_\alpha v (s)\,ds =\omega^{-1}_{\alpha+2}(t) \int_{0}^t\omega_{\alpha+1}(s)\,v(s)
                                  \omega^{-1}_{\alpha+1}(s)\B_\alpha v (s)\,ds\\
&\quad=\big[\omega^{-1}_{\alpha+2}(t) \int_{0}^t\omega_{\alpha+1}(s)\,ds\big] \,v(t^*)\,\omega^{-1}_{\alpha+1}(t^*)\B_\alpha v (t^*)
=v(t^*)\,\omega^{-1}_{\alpha+1}(t^*)\B_\alpha v (t^*),
\end{alignat*}
for some $t^*\in (0,t)$. { From the definition of $\B_\alpha$, \eqref{Balpha}, we have that
\begin{alignat*}{1}
\B_\alpha v (t^*) &=\omega_{\alpha+1}(t^*)\, v(0)+\int_0^{t^*}\omega_{\alpha+1}(s)v'(t^*-s)\,ds\,.
\end{alignat*}
Since $\int_0^{t^*}\omega_{\alpha+1}(s)|v'(t^*-s)|\,ds< \infty$, the desired result follows.} $\quad \qed$
\end{proof}

\section{Error estimates}
\label{sec: errors} In this section, we carry out the first part of our a priori error analysis of the HDG method. To be able to do this, we
carefully use several crucial properties of the operators $\B_\alpha$ and  $\I_{-\alpha}$ introduced in the previous section.\\

{\bf  4.1 Projections}
 Given $\vq\in {\bf H}^1(\oh):=\prod_{K\in \oh} {\bf H}^1(K)$ and $u\in  H^1(\oh):=\prod_{K\in \oh} H^1(K)$,
the projections $ \pv\vq \in {\bf V}_h$ and $\pw u \in W_h$ are on each simplex $K\in\oh$ as the solutions of the following equations:
\begin{subequations}
  \label{eq:proj}
  \begin{align}
  \label{eq:proj1 new}
    (\pv \vq, \vv)_K
    &= ( \vq, \vv)_K
    && \Forall \vv \in \bpol_{k-1}(K),
    \\
    \label{eq:proj2}
    (\pw u, w )_K
    &= ( u, w )_K
    && \Forall w \in \pol_{k-1}(K),
    \\
    \label{eq:proj3}
    \ip{\pv \vq \cdot \vn + \tau \pw u,\mu }_F
    &= \ip{\vq\cdot\vn + \tau  u,\mu }_F
    && \Forall \mu \in \pol_k(F),
  \end{align}
\end{subequations}
for all faces $F$ of the simplex $K$. This is the projection introduced in \cite{CockburnGopalakrishnanSayas09} to study HDG methods for the
steady-state diffusion problem. Its approximation properties are described in the following result. For convenience, we introduce the following
notation: $e_\vq:= \pv \vq-\vq$ and $e_u:= \pw  u - u.$

 We use  $\| \cdot\|_D$ to denote the $L^2(D)$-norm.  The norm on any other Sobolev space $X$ is denoted by $\|\cdot\|_X$. We also denote $\|
\cdot\|_{X(0,T;Y(D))}$ by $\| \cdot\|_{X(Y(D))}$ and omit $D$ whenever $D=\Omega$.

\begin{theorem}[\cite{CockburnGopalakrishnanSayas09}]
\label{thm:proj} Suppose
   {$\tau|_{\d K}$ is nonnegative and $\tau^{\max}_K:=\max\tau|_{\d K} >0$}.
  Then the system~\eqref{eq:proj} is uniquely solvable for
$\pv \vq$ and~$\pw u$.  Furthermore, there is a constant ${C}$
  independent of $K$ and $\tau$ such that
{
  \begin{alignat*}{1}
   \| e_\vq \, \|_K\le&\;C\,h^{k+1}_K\left(|\vq|_{{\bf H}^{k+1}(K)}
                       +{\tau_K^{*}}\,| u|_{H^{k+1}(K)}\right),
    \\
    \| e_u \|_K \le&\;C\,h^{k+1}_K\left(|u|_{H^{k+1}(K)}
                       +|\nabla\cdot\vq|_{H^k(K)}/\tau_K^{\max}\right)\,.
  \end{alignat*}
 Here $\tau_K^{*}:=\max \tau|_{\d K\setminus F^*}$, where $F^*$ is a face of $K$ at which $\tau|_{\d K}$ is maximum.}
\end{theorem}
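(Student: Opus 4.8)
The plan is to establish the two assertions—unique solvability of \eqref{eq:proj} and the two approximation estimates—separately, following the projection-based strategy of \cite{CockburnGopalakrishnanSayas09}. Since \eqref{eq:proj} is a square linear system posed elementwise on each simplex $K$, I would first match dimensions and then rule out nontrivial solutions of the homogeneous problem. The unknowns $(\pv\vq,\pw u)\in\bpol_k(K)\times\pol_k(K)$ number $(d+1)\dim\pol_k(K)$; equations \eqref{eq:proj1 new}--\eqref{eq:proj2} supply $(d+1)\dim\pol_{k-1}(K)$ conditions, while \eqref{eq:proj3}, imposed on each of the $d+1$ faces of $K$ against all of $\pol_k(F)$, supplies $(d+1)\dim\pol_k(F)$ conditions. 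Pascal's identity, in the form $\dim\pol_{k-1}(K)+\dim\pol_k(F)=\dim\pol_k(K)$, shows the two totals agree.

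For uniqueness I would set $\vq=0$, $u=0$ and write $\vp:=\pv\vq$, $w:=\pw u$. Because $\vp\cdot\vn|_F$ and $w|_F$ lie in $\pol_k(F)$ and $\tau$ is constant on each face, choosing $\mu=\vp\cdot\vn+\tau w$ in \eqref{eq:proj3} forces $\vp\cdot\vn+\tau w=0$ on $\d K$. Integrating $(\vp,\nabla w)_K$ by parts and using $\vp\perp\bpol_{k-1}(K)$ (from \eqref{eq:proj1 new}) together with $w\perp\pol_{k-1}(K)$ (from \eqref{eq:proj2}) gives $\ip{\vp\cdot\vn,w}_{\d K}=0$, hence $\ip{\tau w,w}_{\d K}=0$. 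Since $\tau\ge0$ and $\taum>0$, $w$ vanishes on the face where $\tau$ attains its maximum; substituting back into $\vp\cdot\vn+\tau w=0$ yields $\vp\cdot\vn=0$ on all of $\d K$, and a standard argument for divergence-conforming polynomials (first $\nabla\cdot\vp=0$ by testing with $w=\nabla\cdot\vp$, then $\vp\perp\bpol_{k-1}(K)$ with zero normal trace) gives $\vp=0$, after which $w=0$ follows.

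For the estimates I would introduce the $L^2(K)$-orthogonal projections $\Pw$ onto $\pol_k(K)$ and $\boldsymbol P$ onto $\bpol_k(K)$, and split $e_\vq=(\pv\vq-\boldsymbol P\vq)+(\boldsymbol P\vq-\vq)$ and likewise for $e_u$; the second summand of each obeys the classical $h_K^{k+1}$ bound by the Bramble--Hilbert lemma. The corrections $\vdelta^{\vq}:=\pv\vq-\boldsymbol P\vq$ and $\delta^u:=\pw u-\Pw u$ solve a problem of the same form as \eqref{eq:proj} but with homogeneous volume equations and face data $\ip{(\vq-\boldsymbol P\vq)\cdot\vn+\tau(u-\Pw u),\mu}_F$. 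Repeating the energy computation of the uniqueness step produces the identity $\ip{\tau\delta^u,\delta^u}_{\d K}=\ip{(\vq-\boldsymbol P\vq)\cdot\vn+\tau(u-\Pw u),\delta^u}_{\d K}$, which controls the face combination $\vdelta^{\vq}\cdot\vn+\tau\delta^u$; transplanting everything to the reference simplex, bounding the face data by the trace-approximation estimates for $\boldsymbol P$ and $\Pw$, and scaling back converts these into the stated interior $L^2(K)$ bounds.

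The main obstacle I anticipate is the bookkeeping of the $\tau$-dependence: one must show that, under the affine map to the reference element, the flux correction is governed by $\tau_K^{*}$ (the maximum of $\tau$ over the faces \emph{other} than the one realizing the maximum) while the scalar correction carries the inverse factor $1/\taum$, and that both emerge with the correct power of $h_K$. Isolating $\delta^u$ from the combined face quantity—rather than only the sum $\vdelta^{\vq}\cdot\vn+\tau\delta^u$—requires exploiting that $\tau$ is piecewise constant and positive on at least one face, so that the homogeneity of each term is tracked individually; this delicate scaling analysis is the crux of the argument, the remaining steps being routine applications of norm equivalence on finite-dimensional spaces and standard approximation theory.
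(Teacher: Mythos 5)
The paper does not prove this theorem at all: it is imported verbatim from \cite{CockburnGopalakrishnanSayas09}, so there is no in-paper argument to compare against. Your reconstruction follows the strategy of that reference faithfully and correctly — the Pascal-identity dimension count, the uniqueness argument via $\mu=\vp\cdot\vn+\tau w$ and the energy identity $\ip{\tau w,w}_{\partial K}=0$ (noting that on each face either $\tau=0$ or $w=0$, so $\vp\cdot\vn=0$ everywhere on $\partial K$), and the comparison of $\pv\vq,\pw u$ with the plain $L^2$-projections followed by a reference-element scaling argument. The one place where you stop at a sketch is exactly the place you flag yourself: extracting $\delta^u$ alone from $\ip{\tau\delta^u,\delta^u}_{\partial K}$ (which needs the lemma that an element of $\pol_k(K)$ orthogonal to $\pol_{k-1}(K)$ is controlled by its trace on the single face $F^*$, via $w=\lambda_{F^*}p$) and the case analysis that yields $\tau_K^{*}$ rather than $\tau_K^{\max}$ in the flux bound; these are carried out in the appendix lemmas of \cite{CockburnGopalakrishnanSayas09} and would need to be supplied to make the argument complete.
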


Note that  the approximation error of the projection is of order $k+1$  provided that  the stabilization function is such that both  $\tau^*_K$
and $1/\tau^{\max}_K$ are uniformly bounded and the exact solution is
sufficiently regular. For example, we can take $\tau$ to be a
 positive constant. Another possible choice is to take it zero on all but
one face of the simplex $K$, so that $\tau^*_K=0$, and then take it equal to
$1/h_K$ on the remaining face, so that $1/\tau^{\max}_K=h_K$.\\

{\bf 4.2 The equations of the projection of the errors}
 Setting  \begin{equation} \label{eq: comparison}
  (\eq,\eu, \el, \eqhat):=  (\pv \vq-\vq_h, \pw u-u_h, P_M u-\widehat{u}_h, {\bf P}_M \vq-\widehat{\vq}_h),
\end{equation}
where $P_M$ denotes the $L^2$-orthogonal projection onto $M_h$, and ${\bf P}_M$ denotes the vector-valued projection each of whose components
are equal to $P_M$. The projection of the errors satisfy the following equations:
\begin{lemma}        \label{lem:consistency}
We have
  \begin{subequations}
    \label{eq:err}
    \begin{alignat}{2}
      \label{eq:err-a}
 ( \eq,\vr) -
 ( \eu,\dive \vr)_{\oh} +   \ip{ \el,\vr\cdot\vn}
 &  = ( e_{\vq} ,\vr),
 \\
      \label{eq:err-b}
 (\partial_t \eu,w)-( \esig, \nabla w)_{\oh} +   \ip{ \B_\alpha \eqhat \cdot\vn, w}
 & = (e_{u_t},w),
\\
    \label{eq:err-e}
 \ip{\el,\mu}_{\d\Omega}
   & = 0,
\\
    \label{eq:err-c}
 \ip{\B_\alpha  \eqhat \cdot \vn,\mu }-\ip{\B_\alpha  \eqhat \cdot \vn,\mu }_{\d\Omega}
   & =  0,
\\
    \label{eq:err-f}
 \eu|_{t=0} & = 0,
\end{alignat}
for all  $\vr\in {\bf V}_h, \, w\in W_h,$ and $\mu\in M_h$,
  where
\begin{alignat}{2}
  \label{eq:err-d2}
  \eqhat\cdot\vn
                 &:= \eq\cdot\vn  + \tau (\eu -\el)
&&\quad\mbox{ on }\doh.
\end{alignat}
\end{subequations}
\end{lemma}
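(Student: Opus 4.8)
The plan is to obtain each identity by the usual two-step device: first derive a Galerkin-orthogonality relation by subtracting the equations defining the HDG method from the weak formulation \eqref{eq: weak exact solution} satisfied by the exact solution, and then insert the projections $\pv\vq$, $\pw u$, $P_M u$ and ${\bf P}_M\vq$ so as to re-express everything in terms of $\eq$, $\eu$, $\el$ and $\eqhat$. The single structural fact I would invoke repeatedly is that $\B_\alpha$ and $\partial_t$ act only in time, and therefore commute with the (time-independent) spatial projections and with the spatial pairings $(\cdot,\cdot)_K$ and $\ip{\cdot,\cdot}_F$ whenever the second argument is a fixed-in-time test function. This is precisely what allows the pointwise-in-time $L^2$-orthogonality enjoyed by the projections of Theorem \ref{thm:proj} to survive after convolution in time.

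To prove \eqref{eq:err-a} I would test \eqref{eq: weak exact solution1} and \eqref{eq:method-a} against a common $\vr\in{\bf V}_h\subset\prod_{K}\Hdiv{K}$ and subtract. Writing $\vq-\vq_h=\eq-e_\vq$, $u-u_h=\eu-e_u$ and $u-\uhat_h=(u-P_M u)+\el$, the stated identity follows once the two spurious terms are shown to vanish: $(e_u,\dive\vr)_K=0$ since $\dive\vr|_K\in\pol_{k-1}(K)$ and $e_u$ obeys \eqref{eq:proj2}, and $\ip{u-P_M u,\vr\cdot\vn}_F=0$ since $\vr\cdot\vn|_F\in\pol_k(F)$ and $P_M$ is the $L^2(F)$-projection onto $\pol_k(F)$. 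For \eqref{eq:err-b} I would likewise subtract \eqref{eq:method-b} from \eqref{eq: weak exact solution3} tested against $w\in W_h$, using $\vsig=\B_\alpha\vq$ and $\partial_t(u-u_h)=\partial_t\eu-e_{u_t}$. After decomposing $\B_\alpha(\vq-\vq_h)=\esig-\B_\alpha e_\vq$ and $\B_\alpha(\vq-\widehat{\vq}_h)\cdot\vn=\B_\alpha(\vq-{\bf P}_M\vq)\cdot\vn+\B_\alpha\eqhat\cdot\vn$, the proof reduces to $(\B_\alpha e_\vq,\nabla w)_K=0$ and $\ip{\B_\alpha(\vq-{\bf P}_M\vq)\cdot\vn,w}_F=0$. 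Both are obtained by noting that the \emph{untransformed} pairings vanish for every time $s$, via \eqref{eq:proj1 new} with $\nabla w|_K\in\bpol_{k-1}(K)$, and via ${\bf P}_M\vq\cdot\vn=P_M(\vq\cdot\vn)$ on a flat simplicial face together with $w|_F\in\pol_k(F)$, and then pulling $\B_\alpha$ back outside the spatial pairing.

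The four remaining relations are shorter. Identity \eqref{eq:err-f} is immediate from the initial condition \eqref{eq:method-e} and $u|_{t=0}=u_0$. Identity \eqref{eq:err-e} follows from the Dirichlet data, since $\ip{P_M u,\mu}_{\d\Omega}=\ip{u,\mu}_{\d\Omega}=\ip{g,\mu}_{\d\Omega}=\ip{\uhat_h,\mu}_{\d\Omega}$ by \eqref{eq:method-d}. For the transmission condition \eqref{eq:err-c} I would subtract \eqref{eq:method-c} to reduce the claim to $\ip{\B_\alpha{\bf P}_M\vq\cdot\vn,\mu}-\ip{\B_\alpha{\bf P}_M\vq\cdot\vn,\mu}_{\d\Omega}=0$, that is, to the vanishing of the sum over interior faces; because the exact flux $\vq$ has a single-valued trace on the skeleton, ${\bf P}_M\vq$ is single-valued there, so the contributions carrying $\vn^+$ and $\vn^-=-\vn^+$ cancel, and $\B_\alpha$ again passes through the pairing. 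Finally \eqref{eq:err-d2} is definition-checking: expanding $\eqhat\cdot\vn$ and the numerical trace \eqref{HDGtrace}, the asserted relation is equivalent to the pointwise face identity ${\bf P}_M\vq\cdot\vn=\pv\vq\cdot\vn+\tau(\pw u-P_M u)$, which I would read off from \eqref{eq:proj3} after rewriting its right-hand side as $\ip{{\bf P}_M\vq\cdot\vn+\tau P_M u,\mu}_F$, legitimate because $\tau$ is constant on $F$ and every function in sight already lies in $\pol_k(F)$, so that testing against all $\mu\in\pol_k(F)$ forces equality.

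I expect the only genuinely delicate point to be the bookkeeping of $\B_\alpha$: one must make sure that orthogonality properties which hold separately at each time $s$ are not destroyed by the Riemann--Liouville convolution and the subsequent time differentiation. This is exactly the step where the present fractional analysis departs from the classical heat-equation argument of \cite{ChabaudCockburn12}, but it is controlled simply by the commutation of $\B_\alpha$ with the fixed spatial test functions, so no essentially new difficulty arises at the level of this lemma.
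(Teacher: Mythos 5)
Your proposal is correct and follows essentially the same route as the paper: derive the consistency relations by subtracting the HDG equations \eqref{method} from the exact weak form \eqref{eq: weak exact solution}, and then absorb the projection errors using the orthogonality properties \eqref{eq:proj} of $\pv$, $\pw$ and $P_M$ (including the face identity \eqref{eq:proj3} together with the constancy of $\tau$ on each face to obtain \eqref{eq:err-d2}, and the single-valuedness of the exact normal flux for \eqref{eq:err-c}). The only differences are cosmetic orderings of the same orthogonality arguments, e.g.\ you split off $\vq-{\bf P}_M\vq$ on $\doh$ where the paper rewrites the exact boundary term via \eqref{eq:proj3} and \eqref{tauPM} before subtracting.
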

\begin{proof} From \eqref{eq: weak exact solution}, we know that the exact solution $\{\vq,u\}$ satisfies the equations
  \begin{alignat*}{2}
    ( \vq,\vr) -
    (  u,\dive \vr) +   \ip{ u,\vr\cdot\vn}
    &  = 0 &&\quad\mbox{ for all $\vr\in {\bf V}_h$},
    \\
     (u_t,w)-(   \vsig, \nabla w) +
    \ip{  \vsig\cdot\vn, w}
    & =
    (f, w) &&\quad\mbox{ for all $w \in W_h$}\,.
\end{alignat*}
By using the orthogonality  properties of the projections $\pv$, $\pw$, and $\Pm$, we can rewrite these equations as follows:
  \begin{alignat*}{2}
    ( \pv\vq,\vr) -
    ( \pw u,\dive \vr) +   \ip{\Pm u,\vr\cdot\vn} &  = (e_{\vq} ,\vr),
    \\
    (\pw u_t,w)-(  \B_\alpha\pv\vq, \nabla w)
    +\ip{  \B_\alpha (\pv\vq\cdot\vn +\tau(\pw u-\Pm u)), w}
    & =
    (f+e_{u_t}, w),
  \end{alignat*}
  for all $\vr \in {\bf V}_h$ and $w \in W_h$. Indeed,
the fact that $\Pm$ is the $L^2$-projection into $M_h$ was used in the third term of the left-hand  side of the first equation, and the
orthogonality property \eqref{eq:proj3} was used in the third term of the left-hand side of the second equation. To deal with that term, we also
used the fact that
\begin{equation}
\label{tauPM} \ip{\tau (P_Mu-u),\mu}=0 \Forall\mu \in M_h,
\end{equation}
given that, for each element $K\in\oh$, $\tau$ is constant on each face $e$ of $K$. Subtracting the equations
\eqref{eq:method-a} and \eqref{eq:method-b} from the
  above ones, respectively, we obtain equations \eqref{eq:err-a} and \eqref{eq:err-b}, respectively.

  The equation~\eqref{eq:err-e} follows directly from the
equation \eqref{eq:method-d}  and \eqref{heatequation-b}.

To prove  \eqref{eq:err-c}, we note that, by definition of $\eqhat$, \eqref{eq: comparison}, we have
 \begin{align*}
   &\ip{\B_\alpha \eqhat\cdot\vn,\mu}- \ip{\B_\alpha \eqhat\cdot\vn,\mu}_{\d\Omega}
\\
&\quad \quad \quad =[ \ip{ \vsig\cdot\vn,\mu}- \ip{ \vsig\cdot\vn,\mu}_{\d\Omega}]-
 [ \ip{ \sighat_h\cdot\vn,\mu}- \ip{\sighat_h\cdot\vn,\mu}_{\d\Omega}],
\end{align*}
since $P_M$ is the $L^2$-projection into $M_h$.  The first term of the right-hand side is equal to zero because  $\vsig$ is in $\Hdiv{\Omega}$
and the second because the normal component of $\sighat_h$ is
single valued by the equation \eqref{eq:method-c}. Hence, the identity \eqref{eq:err-c} holds.

 Next, let us prove \eqref{eq:err-f}. By
the equation~\eqref{eq:method-e} defining the HDG method, $u_h|_{t=0} = \pw u_0,$ and so $    \eu|_{t=0}
     = \pw u_0 -u_h|_{t=0}
    =  \pw u_0 -\pw u_0
    =  0.$
  It remains to prove the identity \eqref{eq:err-d2}. We have
\begin{alignat*}{2}
\eqhat\cdot\vn&=P_M (\vq\cdot\vn)-(\vq_h\cdot\vn+\tau\,(u_h- \uhat_h)) &&\quad\mbox{ by }\eqref{eq: comparison}~{\rm and}~\eqref{HDGtrace},
\\&=(\pv\vq\cdot\vn+\tau\,(\pw u-P_Mu))-(\vq_h\cdot\vn+\tau\,(u_h-\uhat_h))
&&\quad\mbox{ by }\eqref{eq:proj3},
\\&=\eq\cdot\vn  + \tau (\eu -\el)&&\quad\mbox{ by }\eqref{eq: comparison}.
\end{alignat*}
This completes the proof. $\qed$
\end{proof}

{ \bf 4.3 A first error bound}
\begin{lemma}
\label{coro:energy} For any $T\ge 0,$ we have
\[
\left(\|\eu(T)\|^2 +\|\eq\|^2_{\alpha}
 +2\trnorm{\sqrt{\tau}(\eu-\el)}^2_{\alpha}\right)^{1/2}
  \le  \|e_{u_t}\|_{L^1(L^2)} +d_\alpha\,\max_{t\in(0,T)}\|e_\vq\|_{\alpha,t}.
\]
\end{lemma}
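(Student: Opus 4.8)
The plan is to run the usual HDG energy argument, but with the fractional operator $\B_\alpha$ taking the place of the identity in the flux test function, and to close the resulting estimate with a sharp Gronwall-type inequality rather than the lossy ``maximizing-time'' device. First I would test the error equations \eqref{eq:err-a}--\eqref{eq:err-b} with $\vr=\esig=\B_\alpha\eq$ and $w=\eu$. Integrating by parts elementwise (noting that $\B_\alpha$ commutes with $\dive$), the two volume terms $(\nabla\eu,\esig)_\oh$ cancel, and one is left with the face contribution
\[
\Theta=-\ip{\eu,\esig\cdot\vn}+\ip{\el,\esig\cdot\vn}+\ip{\B_\alpha\eqhat\cdot\vn,\eu}.
\]
Using \eqref{eq:err-d2} and the fact that $\tau$ (hence $\sqrt\tau$) is constant on each face and commutes with $\B_\alpha$, so that $\B_\alpha\eqhat\cdot\vn=\esig\cdot\vn+\tau\B_\alpha(\eu-\el)$, the term $-\ip{\eu,\esig\cdot\vn}$ cancels the copy of $\ip{\esig\cdot\vn,\eu}$ sitting inside the third term; meanwhile the transmission condition \eqref{eq:err-c} tested with $\mu=\el$, together with the boundary condition \eqref{eq:err-e} (which forces $\el$ to vanish on $\partial\Omega$), gives $\ip{\esig\cdot\vn,\el}=-\ip{\tau\B_\alpha(\eu-\el),\el}$. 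This collapses $\Theta$ to $\ip{\B_\alpha(\sqrt\tau(\eu-\el)),\sqrt\tau(\eu-\el)}$. Integrating in time on $(0,\tilde t)$ and using $\eu(0)=0$ from \eqref{eq:err-f}, I obtain the energy identity
\[
\|\eq\|^2_{\alpha,\tilde t}+\tfrac12\|\eu(\tilde t)\|^2+\trnorm{\sqrt\tau(\eu-\el)}^2_{\alpha,\tilde t}=\int_0^{\tilde t}(e_\vq,\esig)\,dt+\int_0^{\tilde t}(e_{u_t},\eu)\,dt.
\]

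Next I would bound the right-hand side. For the first term I use the $\mathbf{L}_2(\Omega)$-valued version of Lemma \ref{appendix}(iii) (applied componentwise in space, with $\tilde t$ in place of $T$) to get $\int_0^{\tilde t}(e_\vq,\esig)\,dt\le d_\alpha\|e_\vq\|_{\alpha,\tilde t}\|\eq\|_{\alpha,\tilde t}$. Doubling the identity and absorbing one copy of $\|\eq\|^2_{\alpha,\tilde t}$ into the left-hand side by Young's inequality leaves
\[
\|\eu(\tilde t)\|^2+\|\eq\|^2_{\alpha,\tilde t}+2\trnorm{\sqrt\tau(\eu-\el)}^2_{\alpha,\tilde t}\le d_\alpha^2\|e_\vq\|^2_{\alpha,\tilde t}+2\int_0^{\tilde t}(e_{u_t},\eu)\,dt.
\]
Writing $\mathcal{E}(\tilde t)$ for the square root of the left-hand side, the pointwise-in-time nonnegativity of the fractional seminorms (Lemma \ref{appendix}(i) and the remark preceding it) gives $\|\eu(s)\|\le\mathcal{E}(s)$, hence $2\int_0^{\tilde t}(e_{u_t},\eu)\,dt\le 2\int_0^{\tilde t}\|e_{u_t}\|\,\mathcal{E}\,ds$; and since $\|e_\vq\|^2_{\alpha,\tilde t}\le\max_{t}\|e_\vq\|^2_{\alpha,t}$, I reach $\mathcal{E}(\tilde t)^2\le c^2+2\int_0^{\tilde t}\|e_{u_t}\|\,\mathcal{E}\,ds$ with $c:=d_\alpha\max_t\|e_\vq\|_{\alpha,t}$.

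The hard part is closing this last inequality with the constant $1$ in front of $\|e_{u_t}\|_{L^1(L^2)}$ that the statement demands. The tempting step of evaluating at the time where $\mathcal{E}$ is maximal produces the quadratic $M^2\le c^2+2MA$ and hence only $M\le 2A+c$, which is too weak. Instead I would invoke the integrating-factor form of the Gronwall lemma: setting $\Psi(\tilde t):=c^2+2\int_0^{\tilde t}\|e_{u_t}\|\,\mathcal{E}\,ds$, one has $\mathcal{E}\le\sqrt{\Psi}$, so $(\sqrt{\Psi})'=\|e_{u_t}\|\,\mathcal{E}/\sqrt{\Psi}\le\|e_{u_t}\|$; integrating from $0$ with $\Psi(0)=c^2$ yields $\mathcal{E}(\tilde t)\le c+\int_0^{\tilde t}\|e_{u_t}\|\,ds$, and taking $\tilde t=T$ is exactly the claimed bound. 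Two points deserve care and explain features of the statement: the majorant must be $\max_t\|e_\vq\|_{\alpha,t}$ rather than $\|e_\vq\|_{\alpha,T}$, because $\|e_\vq\|^2_{\alpha,\tilde t}$ is the integral of the sign-indefinite quantity $(\B_\alpha e_\vq,e_\vq)$ and is not monotone in $\tilde t$; and the pointwise-in-$s$ nonnegativity of the fractional seminorms, needed for $\|\eu(s)\|\le\mathcal{E}(s)$, must be applied to the $\mathcal{C}^1$-in-time errors.
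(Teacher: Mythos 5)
Your proposal is correct and follows essentially the same route as the paper: the same choice of test functions ($\vr=\esig$, $w=\eu$, plus the transmission and boundary equations to collapse the face terms to $\ip{\B_\alpha(\sqrt{\tau}(\eu-\el)),\sqrt{\tau}(\eu-\el)}$), the same use of property (iii) of Lemma~\ref{appendix} with Young's inequality, and the same closing device --- your integrating-factor Gronwall argument is precisely the paper's Lemma~\ref{differentialinequality}, including the observation that the non-monotonicity of $t\mapsto\|e_\vq\|^2_{\alpha,t}$ forces the $\max_{t\in(0,T)}$ in the final bound.
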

\begin{proof}
   Taking {$\vr=\esig$ in \eqref{eq:err-a}, 
          $w=\eu$ in \eqref{eq:err-b},
          $\mu=-\B_\alpha\eqhat\cdot\vn$ in~\eqref{eq:err-e}
     and  $\mu=-\el$ in \eqref{eq:err-c}, and adding the resulting four}
  equations, we get
\[
 \frac12 \frac{d}{dt}\|\eu\|^2 +( \B_\alpha \eq , \eq) +\Psi_h = \; (e_\vq, \esig) + (e_{u_t},\eu),
\]
where by the definition of $\eqhat$, \eqref{eq:err-d2},
\begin{align*}
\Psi_h:=&-(  \eu, \dive \esig) + \ip{ \el,\esig\cdot\vn}
      -( \esig, \nabla \eu)_{\oh}
     +   \ip{ \B_\alpha \eqhat \cdot\vn, \eu
      - \el}
\\
      =&-\ip{  \eu, \esig\cdot\vn} \!+\!   \ip{  \el,\esig\cdot\vn}
         \! +\!   \ip{  \B_\alpha \eqhat \cdot\vn, \eu- \el }
\\
      =&\;\ip{(\B_\alpha \eqhat-\esig)\cdot\vn, \eu-\el}
=\;\ip{\B_\alpha (\sqrt{\tau}(\eu-\el)), \sqrt{\tau}(\eu-\el)}\,.
\end{align*}
 Integrating over the time interval $(0,T)$, and
using the fact that $\eu(0)=0$ by \eqref{eq:err-f},
 \begin{align*}
  \|\eu(T)\|^2 +2 \| \eq\|^2_{\alpha}
 &+2 \trnorm{  \sqrt{\tau}(\eu-\el)}^2_{\alpha} = 2\int_0^T\!\!( e_\vq, \esig)
+ 2\int_0^T\!\!(e_{u_t},\eu)\,.
\end{align*}
Since $2 \int_0^T( e_\vq,\B_\alpha \eq)  \le 2{ d_\alpha}  \| e_\vq\|_{\alpha} \| \eq\|_{\alpha}
 \le { d^2_\alpha}\| e_\vq\|^2_{\alpha} + \|\eq\|^2_{\alpha}$,
 by the property (iii) of Lemma
  \ref{appendix}, and since
$ \int_0^T( e_{u_t}, \eu)  \le  \int_0^T\|e_{u_t}\|\,\|\eu \|, $
\begin{alignat*}{1}
\|\eu(T)\|^2 +\| \eq\|^2_{\alpha}
 +2\trnorm{  \sqrt{\tau}(\eu-\el)}&_\alpha^2
\le { d_\alpha^2 \| e_\vq\|^2_{\alpha}} +2\int_0^T\|e_{u_t}\|\|\eu\|\quad {\rm for}~~ T>0\,.
\end{alignat*}
The result now easily follows from  Lemma \ref{differentialinequality} below with $A(t):= { d_\alpha^2 \| e_\vq\|^2_{\alpha,t}}$,
$B(t):=\|e_{u_t}(t)\|$ and with $E^2(t):=\|\eu(t)\|^2 +\| \eq\|^2_{\alpha,t}
 +2\,\trnorm{  \sqrt{\tau}(\eu-\el)}^2_{\alpha,t}\,. \quad\qed$
\end{proof}
\begin{lemma}[An integral inequality]
\label{differentialinequality} Suppose that, for any $t\ge0$,  we have that $ E^2(t)\le A(t)+2\,\int_{0}^t\,B(s)\,E(s)\,ds, $ for some
nonnegative functions $A$ and $B$. Then, for any $T>0$, $ E(T)\le \max_{t\in(0,T)} A^{1/2}(t)+\int_0^T\,B(s)\,ds. $
\end{lemma}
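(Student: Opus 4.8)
The plan is to reduce the hypothesis to the classical case of a constant right-hand side and then to run a Gronwall/Bihari-type argument adapted to the fact that $E$ enters the integral only to the first power while $E^2$ appears on the left. First I would absorb the time dependence of $A$ into a single constant: setting $M:=\max_{t\in(0,T)}A(t)$ (which is finite, and in our application is even continuous and nondecreasing in $t$), the hypothesis gives, for every $t\in[0,T]$,
\[
E^2(t)\le M + 2\int_0^t B(s)\,E(s)\,ds.
\]
It then suffices to prove the bound $E(T)\le\sqrt{M}+\int_0^T B(s)\,ds$.

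To carry this out I would introduce, for a regularization parameter $\varepsilon>0$, the auxiliary function
\[
\psi(t):=M+\varepsilon+2\int_0^t B(s)\,E(s)\,ds,
\]
which is strictly positive and absolutely continuous on $[0,T]$, with $\psi'(t)=2B(t)E(t)$ almost everywhere. By construction $E^2(t)\le\psi(t)-\varepsilon<\psi(t)$, so $E(t)\le\sqrt{\psi(t)}$. Substituting this bound into the derivative and using $B\ge0$ gives $\psi'(t)\le 2B(t)\sqrt{\psi(t)}$, that is,
\[
\frac{d}{dt}\sqrt{\psi(t)}=\frac{\psi'(t)}{2\sqrt{\psi(t)}}\le B(t)\quad\text{a.e. on }(0,T).
\]
Integrating over $(0,T)$ and using $\psi(0)=M+\varepsilon$ yields $\sqrt{\psi(T)}\le\sqrt{M+\varepsilon}+\int_0^T B(s)\,ds$, and since $E(T)\le\sqrt{\psi(T)}$, letting $\varepsilon\downarrow0$ produces the claimed estimate.

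The main technical obstacle is justifying the differentiation step, $\frac{d}{dt}\sqrt{\psi}=\psi'/(2\sqrt{\psi})$: this computation requires $\psi$ to stay bounded away from zero, which is exactly the reason for the shift by $\varepsilon$, and it requires $B\,E$ to be integrable on $[0,T]$ so that $\psi$ is absolutely continuous and the fundamental theorem of calculus applies to $\sqrt{\psi}$. In the application these conditions are met, since $E$ is continuous in time and $B=\|e_{u_t}\|\in L^1(0,T)$. I would emphasize that the $\varepsilon$-regularization is precisely what makes the argument robust to the possibility that $A$, and hence $E$, vanishes on part of the interval, thereby avoiding any division by zero in the coercivity-type step.
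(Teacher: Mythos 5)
Your argument is essentially the paper's own proof: the authors set $X(t)=\max_{t\in[0,T]}A(t)+2\int_0^t B(s)E(s)\,ds$, observe $\frac{d}{dt}X=2BE\le 2B\sqrt{X}$, hence $\frac{d}{dt}\sqrt{X}\le B$, and integrate. Your $\varepsilon$-shift is a small technical refinement (it guards against division by zero when $X$ could vanish, a point the paper passes over silently), but the route is the same.
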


\begin{proof}
Setting $X(t)=\max_{t\in[0,T]} A(t)+2\,\int_0^t B(s)\,E(s)\,ds$, we see that, for $t\in(0,T)$, $\frac{d}{dt} X(t)= 2\,B(t)\, E(t) \le 2\,B(t)\,
\sqrt{X(t)}$, and so $\frac{d}{dt}\sqrt{X}(t)\le B(t)$. This implies that $\sqrt{X(t)}\le \sqrt{X}(0)+\int_{0}^t\,B(s)\,ds$, and the result
follows. $\quad\qed$ 
\end{proof}

{\bf 4.4 A second error bound} We derive next  an estimate of $\eq$ in the $L^\infty(0,T;L^2(\Omega))-$norm.
\begin{lemma}
\label{coro:energy2} Let $ S^2_{h}:=\langle{\tau}(\eu-\el), (\eu-\el)\rangle.$ For any $T >  0$, we have
\begin{alignat*}{1}
\big(\|\eq(T)\|^2 +S^2_{h}(T) +2\,\|\partial_t\eu\|^2_{-\alpha} \big)^{1/2} \le &\big(\|\eq(0)\|^2 +S^2_{h}(0)\big)^{1/2}
\\&+ d_\alpha\,\max_{t\in(0,T)}{ \|e_{u_t}\|_{-\alpha,t}}
+\|e_{\vq_t}\|_{L^1(L^2)}.
\end{alignat*}
\end{lemma}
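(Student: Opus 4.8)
The plan is to reproduce the energy argument behind Lemma~\ref{coro:energy}, but tuned so that $\eq$ is controlled in $L^2$ rather than in the $\alpha$-seminorm. To make $\|\eq\|^2$ appear I would differentiate the first error equation~\eqref{eq:err-a} in time and test it with $\vr=\eq$, which gives
\[
(\partial_t\eq,\eq)-(\partial_t\eu,\dive\eq)_{\oh}+\ip{\partial_t\el,\eq\cdot\vn}=(e_{\vq_t},\eq).
\]
The essential new difficulty relative to the heat-equation case $\alpha=0$ is that~\eqref{eq:err-b} carries $\B_\alpha\eq$ in place of $\eq$, so the volume terms will not cancel for a naive test function. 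My remedy is to use that $\I^{*}_{-\alpha}$ is the right-inverse of $\B^{*}_\alpha$, see~\eqref{eq: right-inverse}, and to test~\eqref{eq:err-b} with $w=\I^{*}_{-\alpha}\partial_t\eu$. After an elementwise integration by parts in space (and since $\dive$ commutes with $\B_\alpha$) together with~\eqref{eq:err-d2}, the first term $(\partial_t\eu,\I^{*}_{-\alpha}\partial_t\eu)$ integrates in time to $\|\partial_t\eu\|^2_{-\alpha,\tilde t}$, while the adjoint identity $\B^{*}_\alpha\I^{*}_{-\alpha}=\mathrm{id}$ converts $(\B_\alpha\dive\eq,\I^{*}_{-\alpha}\partial_t\eu)$ into $(\dive\eq,\partial_t\eu)$, precisely the volume term needed to cancel the one produced by the differentiated~\eqref{eq:err-a}.

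Adding the two relations and integrating over $(0,\tilde t)$, with the adjoint operators $\I^{*}_{-\alpha},\B^{*}_\alpha$ taken on that interval, the interior coupling cancels and I am left with $\tfrac12\|\eq(\tilde t)\|^2-\tfrac12\|\eq(0)\|^2+\|\partial_t\eu\|^2_{-\alpha,\tilde t}$, the integrated boundary contribution $\int_0^{\tilde t}\big(\ip{\partial_t\el,\eq\cdot\vn}+\ip{\tau(\eu-\el),\partial_t\eu}\big)$, and the data terms $\int_0^{\tilde t}\big((e_{\vq_t},\eq)+(\I_{-\alpha}e_{u_t},\partial_t\eu)\big)$. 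The boundary contribution is the delicate step. Substituting $\eq\cdot\vn=\eqhat\cdot\vn-\tau(\eu-\el)$ from~\eqref{eq:err-d2}, I would prove $\int_0^{\tilde t}\ip{\partial_t\el,\eqhat\cdot\vn}=0$ by writing $\partial_t\el=\B^{*}_\alpha(\I^{*}_{-\alpha}\partial_t\el)$, transferring $\B^{*}_\alpha$ onto $\eqhat\cdot\vn$ through~\eqref{eq: adjointB}, and then applying the transmission condition~\eqref{eq:err-c} with $\mu=\I^{*}_{-\alpha}\partial_t\el\in M_h$, whose trace on $\partial\Omega$ vanishes because $\el$ does there by~\eqref{eq:err-e}. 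The remaining boundary terms then telescope into $\int_0^{\tilde t}\ip{\tau(\eu-\el),\partial_t(\eu-\el)}=\tfrac12\big(S^2_{h}(\tilde t)-S^2_{h}(0)\big)$.

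These manipulations produce the energy identity
\[
\|\eq(\tilde t)\|^2+2\|\partial_t\eu\|^2_{-\alpha,\tilde t}+S^2_{h}(\tilde t)
=\|\eq(0)\|^2+S^2_{h}(0)+2\!\int_0^{\tilde t}\!(e_{\vq_t},\eq)\,dt+2\!\int_0^{\tilde t}\!(\I_{-\alpha}e_{u_t},\partial_t\eu)\,dt .
\]
Writing $E^2(\tilde t)$ for the left-hand side, I would bound the first data term by $2\int_0^{\tilde t}\|e_{\vq_t}\|\,\|\eq\|\le 2\int_0^{\tilde t}\|e_{\vq_t}\|\,E$, which is exactly the form $2\int_0^{\tilde t}B\,E$; the second is bounded by the space-time analogue of Lemma~\ref{appendix}(iv), namely $2\int_0^{\tilde t}(\I_{-\alpha}e_{u_t},\partial_t\eu)\le 2d_\alpha\|e_{u_t}\|_{-\alpha,\tilde t}\|\partial_t\eu\|_{-\alpha,\tilde t}$, after which Young's inequality isolates the data quantity $d_\alpha^2\|e_{u_t}\|^2_{-\alpha,\tilde t}$ while the remaining multiple of $\|\partial_t\eu\|^2_{-\alpha,\tilde t}$ is absorbed by the left-hand side. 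This puts the estimate in the form $E^2(\tilde t)\le A(\tilde t)+2\int_0^{\tilde t}B\,E$ with $A(\tilde t)=\|\eq(0)\|^2+S^2_{h}(0)+d_\alpha^2\|e_{u_t}\|^2_{-\alpha,\tilde t}$ and $B=\|e_{\vq_t}\|$, so Lemma~\ref{differentialinequality} gives $E(T)\le\max_{\tilde t}A^{1/2}(\tilde t)+\int_0^T B$; finally $\sqrt{a+b}\le\sqrt a+\sqrt b$ applied to $A^{1/2}$ delivers the stated estimate.

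The step I expect to be the main obstacle is the loss of the pointwise-in-time cancellation available when $\alpha=0$: because $\B_\alpha$ is nonlocal in time, the interior and boundary terms only match after integrating in time and invoking the right-inverse relation $\B^{*}_\alpha\I^{*}_{-\alpha}=\mathrm{id}$ --- routinely in the volume term, but quite delicately on the element boundaries, where it must be coupled with the single-valuedness~\eqref{eq:err-c} and the homogeneous condition~\eqref{eq:err-e}. A secondary point to record is that differentiating~\eqref{eq:err-a} in time and forming $\I^{*}_{-\alpha}\partial_t\eu$ presuppose enough temporal smoothness of the discrete solution for the adjoint relations~\eqref{eq: adjoint} and~\eqref{eq: right-inverse} to be legitimate, which I would state as a standing regularity assumption on the data.
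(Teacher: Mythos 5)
Your proposal is correct and follows essentially the same route as the paper: both arguments combine a time-differentiated form of the first error equation with the second error equation tested against $\I^{*}_{-\alpha}\partial_t\eu$, use the right-inverse identity $\B^{*}_\alpha\I^{*}_{-\alpha}=\mathrm{id}$ together with \eqref{eq:err-c}, \eqref{eq:err-e} and \eqref{eq:err-d2} to reduce the boundary coupling to $\tfrac12(S_h^2(T)-S_h^2(0))$, and finish with Lemma~\ref{differentialinequality}. The only (immaterial) difference is where the operator $\I^{*}_{-\alpha}\partial_t$ is placed: the paper applies it to \eqref{eq:err-a} and tests with $\vr=\B_\alpha\eq$, whereas you differentiate \eqref{eq:err-a} in time and test with $\vr=\eq$, the two being equivalent after time integration via the adjoint identities.
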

{\em Proof.}
 By the adjoint property \eqref{eq: adjoint} and  the identity property \eqref{eq: right-inverse},
\begin{multline*}
2\int_0^T (\B_\alpha\eq, \I^*_{-\alpha}\partial_t\eq)  =2\int_0^T (\eq, \B^*_\alpha\I^*_{-\alpha}\partial_t\eq)\\=2\int_0^T (\eq,
\partial_t\eq)=\|\eq(T)\|^2-\|\eq(0)\|^2.\end{multline*}
Now, applying the operator $\I^*_{-\alpha}\partial_t$ to the first equation of the errors, \eqref{eq:err-a}, and taking $\vr:=\esig$, we obtain
\begin{multline*}
(\I^*_{-\alpha}\partial_t\eq,\esig) - (\I^*_{-\alpha}\partial_t\eu,\nabla\cdot\esig) \\ + \langle \I^*_{-\alpha}\partial_t\el,\esig\cdot{\bf
n}\rangle = (\I^*_{-\alpha}e_{\vq_t},\esig).
\end{multline*}
Integrating in time from $0$ to $T$ and using the identity of the previous step, we get
\begin{alignat*}{1}
\frac12\|\eq(T)\|^2 -\int_0^T(\I^*_{-\alpha}\partial_t\eu,\nabla\cdot\esig) &+\int_{0}^T\langle \I^*_{-\alpha}\partial_t\el,\esig\cdot{\bf
n}\rangle
\\&= \frac12\|\eq(0)\|^2+\int_0^T(\I^*_{-\alpha}e_{\vq_t},\esig).
\end{alignat*}
Now, taking $w:=\I^*_{-\alpha}\partial_t\eu$ in  equation \eqref{eq:err-b}, and integrating from $0$ to $T$,
\begin{multline*}
\int_0^T[(\partial_t\eu,\I^*_{-\alpha}\partial_t\eu) -(\esig,\nabla\I^*_{-\alpha}\partial_t\eu) \\ + \langle \B_\alpha \eqhat\cdot{\bf
n},\I^*_{-\alpha}\partial_t\eu\rangle] = \int_0^T(e_{u_t},\I^*_{-\alpha}\partial_t\eu). \end{multline*} Adding this equation to the one obtained
in the last step and, rearranging terms,
\begin{multline*}
\|\eq(T)\|^2+2\int_0^T(\partial_t\eu-e_{u_t},\I^*_{-\alpha}\partial_t\eu) +2\Phi_h \\ = \|\eq(0)\|^2+
2\int_0^T(\I^*_{-\alpha}e_{\vq_t},\esig)\\
= \|\eq(0)\|^2+ 2\int_0^T(e_{\vq_t},\eq), ~~\text{by  the  properties \eqref{eq: adjoint} and \eqref{eq: right-inverse},}
\end{multline*}
where
\begin{alignat*}{1}
\Phi_h:=&-\int_0^T(\I^*_{-\alpha}\partial_t\eu,\nabla\cdot\esig{})_{\oh} +\int_{0}^T\langle \I^*_{-\alpha}\partial_t\el,\esig\cdot{\bf n}\rangle
\\
&- \int_0^T(\esig,\nabla\I^*_{-\alpha}\partial_t\eu)_{\oh} + \int_{0}^T \langle \B_\alpha \eqhat\cdot{\bf n},\I^*_{-\alpha}\partial_t\eu\rangle
\\
=&\int_0^T[-\langle \I^*_{-\alpha}\partial_t\eu,\esig\cdot{\bf n}\rangle \!+\!\langle \I^*_{-\alpha}\partial_t\el,\esig\cdot{\bf n}\rangle
\!+\!\langle \B_\alpha \eqhat\cdot{\bf n},\I^*_{-\alpha}\partial_t\eu\rangle]
\\
=&\int_{0}^T [\langle \B_\alpha( \eqhat-\eq)\cdot{\bf n},\I^*_{-\alpha}\partial_t(\eu-\el)\rangle + \langle \B_\alpha \eqhat\cdot{\bf
n},\I^*_{-\alpha}\partial_t\el\rangle]
\\
=&\int_{0}^T \langle \B_\alpha(\eqhat-\eq)\cdot{\bf n},\I^*_{-\alpha}\partial_t(\eu-\el)\rangle
\hskip.9truecm\mbox{ by equations \eqref{eq:err-e} and \eqref{eq:err-c},} \\
=&\int_{0}^T \langle (\eqhat-\eq)\cdot{\bf n},\B_\alpha^*\I^*_{-\alpha}\partial_t(\eu-\el)\rangle
\hskip.9truecm\mbox{ by  the adjoint property \eqref{eq: adjoint},}\\
=&\int_{0}^T \langle \tau(\eu-\el),\partial_t(\eu-\el)\rangle=\frac12 S^2_{h}(T)-\frac12 S^2_{h}(0)
\end{alignat*}
 by the identity property \eqref{eq: right-inverse} and the error equation \eqref{eq:err-d2}.
Therefore, for any $T>0$,
\[
  \|\eq(T)\|^2 + S^2_{h}(T)  +2\|\partial_t\eu\|^2_{-\alpha}  = \|\eq(0)\|^2 +  S^2_{h}(0) + 2\int_0^T[(e_{\vq_t}, \eq) +
(e_{u_t},\I^*_{-\alpha}
\partial _t\eu)]\,.
\]
But,  by property (iv) of Lemma \ref{appendix},
\begin{alignat*}{1}
2\int_0^T(e_{u_t},\I^*_{-\alpha} \partial _t\eu) \le &\, d_\alpha^2  \|e_{u_t}\|_{-\alpha}^2 +\|\partial_t\eu\|^2_{-\alpha},
\end{alignat*}
and since $ \int_0^T(e_{\vq_t}, \eq) \le \int_0^T\|e_{\vq_t}\|\, \|\eq\|, $ we have, that, for any $T>0$, \[ \|\eq(T)\|^2 +S^2_{h}(T)
+2\,\|\partial_t\eu\|^2_{-\alpha} \le \|\eq(0)\|^2 + S^2_{h}(0) + d_\alpha^2  \|e_{u_t}\|_{-\alpha}^2 + 2\,\int_0^T\|e_{\vq_t}\| \,\|\eq\| .
\]
Finally, the desired inequality follows from  Lemma \ref{differentialinequality} with $B(t):=\|e_{\vq_t}(t)\|$ and
\begin{alignat*}{1}
A(t)&:= \|\eq(0)\|^2 + S^2_{h}(0) + d_\alpha^2\,\max_{t\in(0,T)}  { \|e_{u_t}\|_{-\alpha,t}^2},
\\
E^2(t)&:= \|\eq(t)\|^2 +S^2_{h}(t) +2\,\|\partial_t\eu\|^2_{-\alpha,t}\,.\quad\Box
\end{alignat*}

We still need to estimate the  term $ \|\eq(0)\|^2 +S^2_h(0)$   in Lemma  \ref{coro:energy2}.
\begin{lemma}
\label{lemma:time0} We have that $
 \|\eq(0)\|^2
+S^2_h(0) \le \frac{d_\alpha^2}{c_\alpha\,\Gamma(\alpha+2)}\|e_\vq(0)\|^2$, provided $e_{u_t}\in\mathcal{C}^{0}(0,\epsilon;L^2(\Omega))$ and
$e_\vq\in\mathcal{C}^{1}(0,\epsilon; {\bf L}_2(\Omega))$ for some positive $\epsilon.$
\end{lemma}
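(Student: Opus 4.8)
The plan is to prove the estimate by a limiting argument as $\tilde t\downarrow 0$, starting from the energy identity obtained in the proof of Lemma \ref{coro:energy}. Carrying out that same computation on $(0,\tilde t)$ and using $\eu(0)=0$ from \eqref{eq:err-f}, I would first record
\[
\tfrac12\|\eu(\tilde t)\|^2+\|\eq\|^2_{\alpha,\tilde t}+\trnorm{\sqrt{\tau}(\eu-\el)}^2_{\alpha,\tilde t}=\int_0^{\tilde t}(e_{\vq},\B_\alpha\eq)\,dt+\int_0^{\tilde t}(e_{u_t},\eu)\,dt.
\]
For the right-hand side I would bound the first term by property (iii) of Lemma \ref{appendix} (applied elementwise and summed over $\oh$), giving $\int_0^{\tilde t}(e_{\vq},\B_\alpha\eq)\,dt\le d_\alpha\,\|e_{\vq}\|_{\alpha,\tilde t}\,\|\eq\|_{\alpha,\tilde t}$, and I would note that the second term is $o(\tilde t^{\alpha+1})$ because $\eu(0)=0$ and $e_{u_t}$ is continuous near $0$. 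Discarding the nonnegative term $\tfrac12\|\eu(\tilde t)\|^2$ and setting $G(\tilde t):=\|\eq\|^2_{\alpha,\tilde t}+\trnorm{\sqrt{\tau}(\eu-\el)}^2_{\alpha,\tilde t}$, I would use $\|\eq\|_{\alpha,\tilde t}\le\sqrt{G(\tilde t)}$ and solve the resulting quadratic inequality to obtain $G(\tilde t)\le d_\alpha^2\,\|e_{\vq}\|^2_{\alpha,\tilde t}\,(1+o(1))$.

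The decisive step is to pass to pointwise values at $t=0$ in a way that reproduces the stated constant. On the right I would invoke property (v) of Lemma \ref{appendix}, used pointwise in $x$ and integrated over $\Omega$ (the interchange of limit and spatial integration being justified by the $\mathcal C^1$-regularity of $e_{\vq}$), to get $\|e_{\vq}\|^2_{\alpha,\tilde t}=\omega_{\alpha+2}(\tilde t)\,\|e_{\vq}(0)\|^2(1+o(1))=\frac{\tilde t^{\alpha+1}}{\Gamma(\alpha+2)}\|e_{\vq}(0)\|^2(1+o(1))$. On the left, instead of again using property (v) (which would only recover the sharper but differently-shaped constant $d_\alpha^2$), I would apply the coercivity property (i) of Lemma \ref{appendix} to each term of $G(\tilde t)$, obtaining $G(\tilde t)\ge c_\alpha\,\tilde t^{\alpha}\int_0^{\tilde t}\bigl(\|\eq\|^2+S^2_h\bigr)\,dt$. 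The purpose of coercivity here is precisely to produce the factor $c_\alpha\tilde t^{\alpha}$, whose power of $\tilde t$ combines with the $\Gamma(\alpha+2)^{-1}\tilde t^{\alpha+1}$ coming from the right so that a finite, nonzero limit survives.

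Combining the two bounds gives $c_\alpha\tilde t^{\alpha}\int_0^{\tilde t}(\|\eq\|^2+S^2_h)\,dt\le\frac{d_\alpha^2}{\Gamma(\alpha+2)}\tilde t^{\alpha+1}\|e_{\vq}(0)\|^2(1+o(1))$; dividing by $c_\alpha\tilde t^{\alpha+1}$ and letting $\tilde t\downarrow 0$, the left-hand side tends to $\|\eq(0)\|^2+S^2_h(0)$ by continuity at $t=0$, yielding the claimed inequality. The main obstacle will be the careful justification of these limits: applying property (v) to the space-distributed projection error, verifying that the discarded contributions are genuinely of higher order than $\tilde t^{\alpha+1}$ (which is where the hypotheses $e_{u_t}\in\mathcal C^0$ and $e_{\vq}\in\mathcal C^1$ and the identity $\eu(0)=0$ are used), and establishing the continuity at $t=0$ of $\eq$ and of $\sqrt{\tau}(\eu-\el)$ needed to identify the limit of the average of the left-hand side with its value at $0$. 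Matching the $\tilde t$-exponents produced by properties (i) and (v)—and thereby being forced to use coercivity rather than the sharper estimate on the left—is the conceptual heart of the argument.
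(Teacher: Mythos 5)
Your proposal is correct and follows essentially the same route as the paper: the paper likewise applies the energy bound of Lemma \ref{coro:energy} on $(0,t)$, uses the coercivity property (i) on the left to produce the factor $c_\alpha t^\alpha$ times the time-average of $\|\eq\|^2+S_h^2$, uses property (v) to evaluate $\lim_{t\downarrow 0}t^{-(1+\alpha)}\|e_\vq\|^2_{\alpha,t}=\|e_\vq(0)\|^2/\Gamma(\alpha+2)$, and uses the continuity of $e_{u_t}$ to show the remaining term vanishes in the limit. The only cosmetic difference is that you inline the derivation of the energy inequality and the quadratic absorption step, whereas the paper simply cites Lemma \ref{coro:energy}.
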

\begin{proof}
Setting $\Theta_h(t):= \|\eq(t)\|^2 +S_h^2(t)$, we get, by the coercivity property (i) of Lemma \ref{appendix}, that
\[
\big(c_\alpha t^\alpha \int_0^t \Theta_h \big)^{1/2} \le \big(\|\eq\|_{\alpha,t}^2 +\trnorm{\sqrt{\tau}(\eu-\el)}^2_{\alpha,t}\big)^{1/2} \le
\int_0^t\|e_{u_t}\| +d_\alpha\underset{t^*\in(0,t)}{\max}\|e_\vq\|_{\alpha,t^*}
\]
by Lemma  \ref{coro:energy}. Then
\[
\Theta_h^{1/2}(0) = \lim_{t\downarrow0}\, t^{-(1+\alpha)/2}\,\big(t^\alpha\int_0^t\Theta_h(s)\,ds\big)^{1/2} \le
c^{-1/2}_\alpha\,(T_1+d_\alpha\,T_2),
\]
where $ T_1:= \lim_{t\downarrow0}\,t^{-(1+\alpha)/2}\int_0^t\|e_{u_t}\|=0$, by the assumption on $e_{u_t}$,
\begin{alignat*}{1}
T_2:=&\; \lim_{t\downarrow0}\,t^{-(1+\alpha)/2}\underset{t^*\in(0,t)}{\max}\|e_\vq\|_{\alpha,t^*} =\;
\frac{1}{\Gamma^{1/2}(\alpha+2)}\,\|e_\vq(0)\|,
\end{alignat*}
by property (v) of Lemma \ref{appendix}. This completes the proof. $\quad \qed$
\end{proof}

{\bf 4.5 The error estimates} We are now ready to obtain our  HDG error estimates. By Lemmas  \ref{coro:energy}, \ref{coro:energy2}, and
\ref{lemma:time0}, we get
\[
\|(u-u_h)(T)\|\le \|e_u(T)\| +\trnorm{[e_\vq,e_u]}_{1,\alpha}~~{\rm and}~~ \|(\vq-\vq_h)(T)\|\le \|e_\vq(T)\| +\trnorm{[e_\vq,e_u]}_{2,\alpha},
\]
where
\begin{alignat*}{1}
\trnorm{[\vq,u]}_{1,\alpha}&:=\|u_t\|_{L^1(L^2)}+d_\alpha\,\max_{t\in(0,T)} { \|\vq\|_{\alpha,t}},
\\
\trnorm{[\vq,u]}_{2,\alpha}&:= \frac{d_\alpha}{c^{1/2}_\alpha\,\Gamma^{1/2}(\alpha+2)} \|\vq(0)\|+\|\vq_t\|_{L^1(L^2)}+d_\alpha\,
\max_{t\in(0,T)}{ \|u_t\|_{-\alpha,t}}.
\end{alignat*}
Note that when $\alpha=0$, we recover the error estimates for the HDG methods for the heat equation of \cite[Theorem 2.1]{ChabaudCockburn12}
since in this case $d_0=1$, $c_0=1$ and $\Gamma(2)=1$. If we now use the approximation properties of the projections $\pv$ and $\pw$ of Theorem
\ref{thm:proj}, we obtain our optimal HDG error estimates.

\begin{theorem}
\label{thm:ee} Assume that $u\in\mathcal{C}^1(0,T; H^{k+1}(\Omega))$ and $\vq\in\mathcal{C}^1(0,T;{\bf H}^{k+1}(\Omega))$. Assume also that
$\tau^*_K$ and $1/\tau^{\max}_K$ are bounded by $\mathsf {C}$. Then we have that
\[
\|(u-u_h)(T)\|\le\, C_1\,h^{k+1}
\quad\mbox{ and } \qquad
\|(\vq-\vq_h)(T)\|\le\, C_2\,h^{k+1}.
\]
The constant $C_i$, $i=1,2$, only depends on ${C}$,  $\alpha$,
$\|u\|_{\mathcal{C}^1(H^{k+1})}$, and on $\|\vq\|_{\mathcal{C}^1(H^{k+1})}$.
\end{theorem}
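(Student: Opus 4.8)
The plan is to feed the approximation properties of the projection (Theorem~\ref{thm:proj}) into the two reduction inequalities stated just above, so that the whole theorem becomes a matter of estimating the projection errors $e_\vq=\pv\vq-\vq$ and $e_u=\pw u-u$ (together with their time derivatives) in the various space-time norms packaged inside $\trnorm{[e_\vq,e_u]}_{1,\alpha}$ and $\trnorm{[e_\vq,e_u]}_{2,\alpha}$. Since $u-u_h=-e_u+\eu$ and $\vq-\vq_h=-e_\vq+\eq$, Lemmas~\ref{coro:energy}, \ref{coro:energy2} and \ref{lemma:time0} reduce the task to showing that each of $\|e_u(T)\|$, $\|e_\vq(T)\|$, $\|e_\vq(0)\|$, $\|e_{u_t}\|_{L^1(L^2)}$, $\|e_{\vq_t}\|_{L^1(L^2)}$, $\max_t\|e_\vq\|_{\alpha,t}$ and $\max_t\|e_{u_t}\|_{-\alpha,t}$ is $O(h^{k+1})$, with an implied constant depending only on $\mathsf{C}$, $\alpha$, $\|u\|_{\mathcal{C}^1(H^{k+1})}$ and $\|\vq\|_{\mathcal{C}^1(H^{k+1})}$.

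Next I would dispose of the non-fractional terms. The key observation is that $\pv$ and $\pw$ are defined pointwise in time through time-independent spatial conditions, so they commute with $\partial_t$; hence $e_{\vq_t}=\pv\vq_t-\vq_t$ and $e_{u_t}=\pw u_t-u_t$ are again projection errors, now of $\vq_t$ and $u_t$. This is exactly why the hypothesis is $\mathcal{C}^1(0,T;H^{k+1})$ rather than $\mathcal{C}^0$. Because $\tau^*_K$ and $1/\tau^{\max}_K$ are bounded by $\mathsf{C}$, the constants in Theorem~\ref{thm:proj} are uniform over the mesh and over $t$, and summing the elementwise bounds (using $|\nabla\cdot\vq|_{H^k(K)}\le|\vq|_{H^{k+1}(K)}$) gives $\|e_\vq(t)\|,\|e_u(t)\|,\|e_{\vq_t}(t)\|,\|e_{u_t}(t)\|\le C\,h^{k+1}$ uniformly in $t$. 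The pointwise-in-time and $L^1(L^2)$ quantities then follow immediately, the latter by $\|e_{u_t}\|_{L^1(L^2)}\le T\,\|e_{u_t}\|_{L^\infty(L^2)}$ and likewise for $e_{\vq_t}$.

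The heart of the argument is the pair of fractional seminorms. For the positive-index one, writing out $\|e_{u_t}\|^2_{-\alpha,t}=\int_0^t(\I_{-\alpha}e_{u_t},e_{u_t})\,ds$ and using that $\I_{-\alpha}$ has the nonnegative kernel $\omega_{-\alpha}$, a Cauchy--Schwarz estimate in space followed by $\int_0^s\omega_{-\alpha}(s-r)\,dr=\omega_{1-\alpha}(s)$ and $\int_0^t\omega_{1-\alpha}(s)\,ds=\omega_{2-\alpha}(t)$ yields the clean bound $\|e_{u_t}\|^2_{-\alpha,t}\le\omega_{2-\alpha}(t)\,\|e_{u_t}\|^2_{L^\infty(L^2)}$, which is $O(h^{2(k+1)})$ with a $T$-dependent but $h$-independent factor. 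The negative-index seminorm $\|e_\vq\|^2_{\alpha,t}=\int_0^t(\B_\alpha e_\vq,e_\vq)\,ds$ is more delicate because $\B_\alpha$ is derivative-like. Here I would integrate by parts in time, using $\B_\alpha e_\vq=\partial_s(\omega_{\alpha+1}\ast e_\vq)$ and $(\omega_{\alpha+1}\ast e_\vq)(0)=0$, to rewrite it as a boundary term at $s=t$ minus $\int_0^t(e_{\vq_t},\omega_{\alpha+1}\ast e_\vq)\,ds$; bounding $\|(\omega_{\alpha+1}\ast e_\vq)(s)\|\le\omega_{\alpha+2}(s)\,\|e_\vq\|_{L^\infty(L^2)}$ then gives $\|e_\vq\|^2_{\alpha,t}\le\omega_{\alpha+2}(t)\,\|e_\vq\|^2_{L^\infty(L^2)}+\omega_{\alpha+3}(t)\,\|e_\vq\|_{L^\infty(L^2)}\|e_{\vq_t}\|_{L^\infty(L^2)}$, again $O(h^{2(k+1)})$. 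Assembling all the pieces in the two reduction inequalities and tracking the dependence of the constants completes the proof.

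I expect this last seminorm, $\max_t\|e_\vq\|_{\alpha,t}$, to be the main obstacle: unlike the integral operator $\I_{-\alpha}$, the operator $\B_\alpha$ is nonlocal \emph{and} unbounded, so it admits no direct $L^\infty(L^2)$ bound, and the integration-by-parts device is what forces the appearance of $e_{\vq_t}$ and hence the full $\mathcal{C}^1$-in-time regularity assumption. Everything else is essentially bookkeeping built on the estimates already established in Lemmas~\ref{coro:energy}--\ref{lemma:time0}.
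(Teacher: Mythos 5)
Your proposal is correct and follows the same route as the paper: the paper's own proof consists precisely of the reduction inequalities $\|(u-u_h)(T)\|\le\|e_u(T)\|+\trnorm{[e_\vq,e_u]}_{1,\alpha}$ and $\|(\vq-\vq_h)(T)\|\le\|e_\vq(T)\|+\trnorm{[e_\vq,e_u]}_{2,\alpha}$ obtained from Lemmas~\ref{coro:energy}, \ref{coro:energy2} and \ref{lemma:time0}, combined with the projection estimates of Theorem~\ref{thm:proj}. In fact you supply more detail than the paper does, since the paper leaves implicit the (correct) estimates you give for $\max_t\|e_\vq\|_{\alpha,t}$ and $\max_t\|e_{u_t}\|_{-\alpha,t}$ in terms of $L^\infty(L^2)$ projection errors.
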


Note that, provided that the exact solution is smooth, the above error estimates are uniform for $\alpha\in [\alpha^*, 0]$ provided
$\alpha^*>-1$. This is not true for $\alpha^*=-1$ since the coefficients $d_\alpha$ and $1/c_\alpha$ behave like $1/(\alpha+1)$ as $\alpha$ goes
to $-1$.  Note also that these results hold even when the domain
  $\Omega$ is not convex.

\section{Superconvergence and post-processing}\label{sec:super}
In this section, we carry out the second part of our a priori error analysis. We prove superconvergence results which will allow us to compute a
new, better approximation to $u$ by means of an element-by-element postprocessing. We begin by describing such approximation. Then, we show how
to get our superconvergence result by a duality argument.

Following \cite{GastaldiNochetto89,Stenberg88,Stenberg91,ChabaudCockburn12}, for each fixed $t\in[0,T]$, we define  the postprocessed HDG
approximation $u_{h}^\star \in \pol_{k+1}(K)$ to $u$ for  each
 simplex $K\in\oh$, as follows:
\begin{subequations}
\label{eq:ustar}
  \begin{alignat}{2}
\label{eq:ustar-a}
 (u^\star_h(t) , 1 )_K
     =&\;(u_h(t), 1 )_K
  \\
\label{eq:ustar-b}
    (\nabla u^\star_h(t) , \nabla w )_K
     =&-(\vq_h(t) , \nabla w )_K
    &&\qquad\text{ for all } w \in \pol_{k+1}(K).
  \end{alignat}
\end{subequations}
It is not difficult to obtain the following result:
\begin{equation}
\label{ustar}  \|u(t)-u^\star_h(t)\|_K  \le C\,h_{{K}}^{k+2}\,|u(t)|_{H^{k+2}(K)}+\|P_0 \eu(t)\|_K+C\,h\,\|\eq(t)\|_K.
\end{equation}
Here $P_0$ is the $L^2(\Omega)$-projection into the space of functions which are constant on each element $K\in\oh$.\\

{\bf 5.1 A first estimate of $\|P_0\eu(T)\|$ by duality argument} We see that if the term $\|{P_0\eu}\|$  is of order $O(h^{k+2})$, we would
have that the postprocessed approximation $u_h^\star$ would converge faster than the original approximation $u_h$. To obtain such an estimate,
the traditional duality approach consists in, since we can write $\|P_0\eu(T)\|= \sup_{{\Theta}\in
  C^\infty_0(\Omega)}\frac{(P_0\eu(T),\Theta)}{\|\Theta\|}$,
estimating the expression $(P_0\eu(T),\Theta)$ by using the solution of the dual problem
  \begin{subequations}
    \label{eq:dual}
    \begin{align}
      \label{eq:dual-1}
     \vPhi + \nabla \vPsi & = 0 &&\text{ on } \Omega \times (0,T), \\
      \label{eq:dual-2}
      \vPsi_t-\dive \B_\alpha^* \vPhi & = 0 &&\text{ on } \Omega \times (0,T), \\
      \label{eq:dual-3}
      \vPsi &=0  &&\text{ on } \d \Omega \times (0,T), \\
      \label{eq:dual-4}
      \vPsi(T) &=\Theta &&\text{ on } \Omega.
    \end{align}
  \end{subequations}

In the next result, we give an expression for the quantity $(P_{0}\eu(T),\vTheta)$ in terms of the errors $\esig$, $\eu$ and the solution of the
dual problem. In it, $\mathrm{I}_h$ is any interpolation operator from $L^2(\Omega)$ into $W_h\cap H^1_0(\Omega)$, $\Pw$ is the $L^2$-projection
into $W_h$ and $\boldsymbol \Pi^{\mbox{{\rm\tiny{BDM}}}}$ is the well-known projection associated to the lowest-order Brezzi-Douglas-Marini
(BDM) space, see \cite{BrezziFortin91}.

\begin{lemma}
\label{lemma:duality3}  Assume that $k\ge1$. Then, for any $T>0$,  $(P_{0}\eu(T),\vTheta)$  equals
\begin{align*}
\; \int_0^{T} [(\eq,\B^*_\alpha(-\boldsymbol \Pi^{\mbox{{\rm\tiny{BDM}}}}\nabla\vPsi +\nabla\mathrm{I}_h\vPsi)) +(e_{\vq}
,&\B^*_\alpha(\boldsymbol \Pi^{\mbox{{\rm\tiny{BDM}}}}\nabla\vPsi -\nabla \Pw\vPsi))
\\&\quad\quad
+(\partial_t \eu-e_{u_t},P_{0}\vPsi-\mathrm{I}_h\vPsi) ].
\end{align*}
\end{lemma}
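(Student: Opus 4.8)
I would establish this identity by a duality argument: use the solution of the dual problem \eqref{eq:dual} as a test function in the error equations of Lemma~\ref{lem:consistency}, and manipulate the resulting terms so that $(P_0\eu(T),\vTheta)$ emerges. The starting point is the dual equation \eqref{eq:dual-4}, $\vPsi(T)=\vTheta$, combined with the fact that $P_0$ is an $L^2$-projection onto piecewise constants, so that $(P_0\eu(T),\vTheta)=(P_0\eu(T),\vPsi(T))=(\eu(T),P_0\vPsi(T))$. I would then try to write this terminal-time inner product as a time integral, by expressing $(\eu(T),P_0\vPsi(T))$ using the fundamental theorem of calculus and the dual evolution equation \eqref{eq:dual-2}. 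The key structural observation is that the adjoint operator $\B_\alpha^*$ will appear naturally because of the way $\B_\alpha$ acts on $\eq$ in \eqref{eq:err-b}; moving it across the time integral via the adjoint property \eqref{eq: adjointB} converts fluxes paired with $\vPhi=-\nabla\vPsi$ into terms of the form $(\eq,\B_\alpha^*(\cdots\nabla\vPsi))$, which is exactly the shape of the right-hand side.

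\textbf{The main steps.} First I would differentiate $(\eu(t),P_0\vPsi(t))$ in time (or equivalently integrate $\frac{d}{dt}$ over $(0,T)$), using $\eu(0)=0$ from \eqref{eq:err-f} to kill the lower endpoint, producing one term with $\partial_t\eu$ tested against $P_0\vPsi$ and one term with $\eu$ tested against $P_0\vPsi_t$. Next I would use the dual equation \eqref{eq:dual-2}, $\vPsi_t=\dive\B_\alpha^*\vPhi$, to rewrite the second term. The crucial maneuver is then to feed the HDG error equations into these expressions: the error equation \eqref{eq:err-b} for $\partial_t\eu$ (tested against a discrete version of $\vPsi$, namely $\mathrm{I}_h\vPsi$) and the error equation \eqref{eq:err-a} for $\eq$ (tested against a suitable discrete flux). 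Here the projections $\boldsymbol\Pi^{\mbox{{\rm\tiny{BDM}}}}\nabla\vPsi$, $\nabla\mathrm{I}_h\vPsi$, and $\nabla\Pw\vPsi$ enter: they are inserted and subtracted so that the orthogonality properties of the HDG projections $\pv,\pw$ and of $P_M$ annihilate the boundary/trace terms $\langle\cdot\rangle$, leaving only volume inner products over $\oh$. Throughout, the adjoint relation \eqref{eq: adjointB} is applied to transfer $\B_\alpha$ off the error quantities and onto the (smooth) dual variables, converting everything into the stated $\B_\alpha^*$-form.

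\textbf{The delicate point.} The bookkeeping of the trace terms is where the real work lies. Every integration-by-parts on an element $K$ generates boundary contributions $\ip{\cdot,\cdot}$, and these must cancel among the four error equations \eqref{eq:err-a}--\eqref{eq:err-d2} exactly as the terms $\Psi_h$ and $\Phi_h$ canceled in the proofs of Lemmas~\ref{coro:energy} and~\ref{coro:energy2}. The transmission and boundary conditions \eqref{eq:err-e} and \eqref{eq:err-c}, together with the single-valuedness of the normal component of $\B_\alpha\eqhat$ and the definition \eqref{eq:err-d2}, are what force this cancellation; in particular the choice of $\mathrm{I}_h\vPsi\in W_h\cap H^1_0(\Omega)$ guarantees the interelement jumps and the boundary term on $\d\Omega$ vanish. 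The subtlety special to the fractional case is that $\B_\alpha^*$ does not commute with these spatial manipulations in a completely transparent way, so I would be careful to apply \eqref{eq: adjointB} only after the spatial terms are already in divergence-free-compatible form. I expect the main obstacle to be verifying that, after inserting $\boldsymbol\Pi^{\mbox{{\rm\tiny{BDM}}}}\nabla\vPsi$ as the intermediary, the commuting-diagram property of the BDM projection ($\dive\boldsymbol\Pi^{\mbox{{\rm\tiny{BDM}}}}\vPhi=P_0\dive\vPhi$ in the lowest-order case) lines up precisely with the appearance of $P_0\vPsi$ rather than $\Pw\vPsi$ in the last term, which is exactly what produces the $P_{0}\vPsi-\mathrm{I}_h\vPsi$ factor in the final expression.
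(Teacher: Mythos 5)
Your plan coincides with the paper's own proof: the same duality setup $(P_0\eu(T),\vTheta)=\int_0^T[(\partial_t\eu,P_0\vPsi)+(\eu,P_0\nabla\cdot\B_\alpha^*\vPhi)]$, the same use of the BDM commuting property and of the error equations tested with $\B_\alpha^*\boldsymbol\Pi^{\mbox{{\rm\tiny{BDM}}}}\vPhi$ (where $k\ge1$ is needed so this lies in ${\bf V}_h$) and with $\mathrm{I}_h\vPsi$, and the same cancellation of trace terms and projection-orthogonality steps producing the $\nabla\Pw\vPsi$ and $P_0\vPsi-\mathrm{I}_h\vPsi$ corrections. No gaps; this is essentially the paper's argument.
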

\begin{proof}
Since $\vPsi(T)=\vTheta$ by \eqref{eq:dual-4} and $\eu(0)=0$ by \eqref{eq:err-f}, we have
\begin{multline*}
(P_0\eu(T),\vTheta)
  = \int_0^{T} [(\partial_t P_0\eu,\vPsi)+(P_0\eu,\vPsi_t)]
  \\= \int_0^{T} [(\partial_t \eu,P_0\vPsi)+(\eu, P_0\nabla\cdot \B_\alpha^* \vPhi)]
\end{multline*}
by the definition of the $L^2$-projection $P_0$ and by  \eqref{eq:dual-2}.

Let us work on the last term of the right-hand side. By the commutativity property $P_0\nabla\cdot=\nabla\cdot\boldsymbol
\Pi^{\mbox{{\rm\tiny{BDM}}}}$, we have $ (\eu, P_0\nabla\cdot \B_\alpha^* \vPhi)=(\eu, \nabla\cdot\B_\alpha^*\boldsymbol
\Pi^{\mbox{{\rm\tiny{BDM}}}} \vPhi).$  Since $k\ge1$, we can take ${\bf r}:=\B_\alpha^*\boldsymbol \Pi^{\mbox{{\rm\tiny{BDM}}}} \vPhi$ in the
first error equation {\eqref{eq:err-a}}, to get
\begin{alignat*}{2}
 (\eu, P_0\nabla\cdot \B_\alpha^* \vPhi)\!\!=& (\eu, \nabla\cdot\B_\alpha^*\boldsymbol \Pi^{\mbox{{\rm\tiny{BDM}}}} \vPhi),\\
\!\! =&(\eq,\B_\alpha^*\boldsymbol \Pi^{\mbox{{\rm\tiny{BDM}}}} \vPhi) \!\!+\!\!\ip{\el,\B_\alpha^*\boldsymbol \Pi^{\mbox{{\rm\tiny{BDM}}}}
\vPhi\cdot{\bf n}}
-(e_\vq,\B_\alpha^*\boldsymbol \Pi^{\mbox{{\rm\tiny{BDM}}}} \vPhi)\\
\!\!=& (\eq,\B_\alpha^*\boldsymbol \Pi^{\mbox{{\rm\tiny{BDM}}}} \vPhi)-(e_\vq,\B_\alpha^*\boldsymbol \Pi^{\mbox{{\rm\tiny{BDM}}}} \vPhi),
\end{alignat*}
since $\ip{\el,\B_\alpha^*\boldsymbol \Pi^{\mbox{{\rm\tiny{BDM}}}} \vPhi\cdot{\bf n}} \!\!=\!\!\ip{\el,\B_\alpha^*\boldsymbol
\Pi^{\mbox{{\rm\tiny{BDM}}}} \vPhi\cdot{\bf n}}_{\partial\Omega}=0$ because $\B_\alpha^*\boldsymbol \Pi^{\mbox{{\rm\tiny{BDM}}}} \vPhi\in
\Hdiv\Omega$ and $\el=0$ on $\partial\Omega$ by \eqref{eq:err-e}\,.

Integrating in time from $0$ to $T$ and using the adjoint property \eqref{eq: adjoint}, we get
\[
\int_0^T(\eq,\B_\alpha^*(\boldsymbol \Pi^{\mbox{{\rm\tiny{BDM}}}}\vPhi)) =\int_0^T(\eq,\B_\alpha^*(-\boldsymbol
\Pi^{\mbox{{\rm\tiny{BDM}}}}\nabla\vPsi +\nabla\mathrm{I}_h\vPsi)) -\int_0^T(\esig,\nabla\mathrm{I}_h\vPsi).
\]
But, by the error equation \eqref{eq:err-b} with
$w:=\mathrm{I}_h\vPsi$,
\begin{alignat*}{1}
(\esig,\nabla\mathrm{I}_h\vPsi) &=(\partial_t\eu-e_{u_t},\mathrm{I}_h\vPsi) -\ip{\B_\alpha \eqhat
  \cdot{\bf n},\mathrm{I}_h\vPsi}
=(\partial_t\eu -e_{u_t},\mathrm{I}_h\vPsi)
\end{alignat*}
since
 $\ip{\B_\alpha \eqhat\cdot{\bf n},\mathrm{I}_h\vPsi}= \ip{\B_\alpha \eqhat\cdot{\bf n},\mathrm{I}_h\vPsi}_{\partial\Omega}=0$ because the
normal component of $\B_\alpha \eqhat$ is single valued by \eqref{eq:err-c} and $\mathrm{I}_h\vPsi=0$ on $\partial\Omega$ by the boundary
condition  \eqref{eq:dual-3}.

Then, putting together all the above intermediate steps,  $(P_0\eu(T),\vTheta)$ equals
\[
 \int_0^{T} [(\eq,\B_\alpha^*(\nabla\mathrm{I}_h\vPsi-\boldsymbol \Pi^{\mbox{{\rm\tiny{BDM}}}}\nabla\vPsi)) -(e_{\vq}
,\B_\alpha^*\boldsymbol \Pi^{\mbox{{\rm\tiny{BDM}}}} \vPhi) +(\partial_t \eu,P_0\vPsi-\mathrm{I}_h\vPsi) +(e_{u_t},\mathrm{I}_h\vPsi)].
\]
 Therefore, the  desired result now follows after noting that
\[
\int_0^T(e_{\vq} ,\B_\alpha^*\boldsymbol \Pi^{\mbox{{\rm\tiny{BDM}}}} \vPhi) = \int_0^T(e_\vq,\B_\alpha^*(\boldsymbol
\Pi^{\mbox{{\rm\tiny{BDM}}}}\nabla\vPsi-\nabla \Pw\vPsi)),
\]
and that $(e_{u_t},\mathrm{I}_h\vPsi) =(e_{u_t},\mathrm{I}_h\vPsi-P_0\vPsi)$, by \eqref{eq:dual-1}, the definition of $\Pw\vPsi$ and  the
orthogonality property of the projection $\pv$, \eqref{eq:proj1 new}; and by the definition of $P_0\vPsi$ and the orthogonality property of the
projection $\pw$, \eqref{eq:proj2}. $\quad \qed$
\end{proof}

 Now,  as a direct consequence of  the
previous lemma and by property (ii) of Lemma \ref{appendix}, we have that
\begin{align*}
\big|(P_{0}\eu(T),\vTheta) \big| \le& \|\eq\|_{L^{\infty}(L^2)}\,\|\B^*_\alpha(\boldsymbol \Pi^{\mbox{{\rm\tiny{BDM}}}}\nabla\vPsi
-\nabla\mathrm{I}_h\vPsi)\|_{L^{1}(L^2)}
\\
& +\|e_\vq\|_{L^{\infty}(L^2)}\, \|\B_\alpha^*(\boldsymbol \Pi^{\mbox{{\rm\tiny{BDM}}}}\nabla\vPsi -\nabla
\Pw\vPsi)\|_{L^{1}(L^2(\mathcal{T}_h))}
\\& +(\|\partial_t \eu\|_{-\alpha} +\|e_{u_t}\|_{-\alpha})\, \|\mathrm{I}_h\vPsi-P_{0}\vPsi\|_{\alpha}.
\end{align*}
This implies the following estimate of $\|P_0\eu(T)\|$;
\begin{equation}\label{firstestimate}
\|P_0\eu(T)\| \le \; \mathsf{H}_1(\vTheta)\,(\|\eq\|_{L^\infty(L^2)}
    + \|e_\vq\|_{L^\infty(L^2)})
 + \mathsf{H}_2(\vTheta)\left(\|\partial_t \eu\|_{-\alpha}
    + \|e_{u_t}\|_{-\alpha}\right),
\end{equation}
where
{\small
\begin{align*}
\mathsf{H}_1(\vTheta):=&\!\!\!\sup_{{\Theta\in \mathcal{C}^\infty_0(\Omega)}} \!\!\! \max\{\frac{\|\B_\alpha^*(\boldsymbol
\Pi^{\mbox{{\rm\tiny{BDM}}}}\nabla\vPsi \!-\!\nabla\mathrm{I}_h\vPsi)\|_{L^1\!(\!L^2\!)}}{\|{\Theta}\|}, \frac{\|\B_\alpha^*(\boldsymbol
\Pi^{\mbox{{\rm\tiny{BDM}}}}\nabla\vPsi \!-\!\nabla
  \Pw\vPsi)\|_{L^1\!(\!L^2(\mathcal{T}_h)\!)}}{\|{\Theta}\|}\},
\\
\mathsf{H}_2(\vTheta):=&\sup_{{\Theta\in \mathcal{C}^\infty_0(\Omega)}} \frac{\|P_0\vPsi-\mathrm{I}_h\vPsi\|_{\alpha}}{\|{\Theta}\|}.
\end{align*}
}
The quantity $\mathsf{H}_1(\vTheta)$ can be bounded by
\[
C\,h\, \sup_{{\Theta\in \mathcal{C}^\infty_0(\Omega)}} \frac{ \|\B_\alpha^*\vPsi\|_{L^1(H^2)}}{\|{\Theta}\|} \le C\,h\, \sup_{{\Theta\in \mathcal{C}^\infty_0(\Omega)}} \frac{
  \|B_\alpha^*\Delta\vPsi\|_{L^1(L^2)}}{\|{\Theta}\|}
= C\,h\, \sup_{{\Theta\in \mathcal{C}^\infty_0(\Omega)}} \frac{ \|\vPsi_t\|_{L^1(L^2)}}{\|{\Theta}\|},
\]
where, to get the inequality, we used the well-known elliptic regularity
property
\begin{alignat}{1}
\label{ellipticregularity}
\|v\|_{H^2(\Omega)}\le C\,\|\Delta v\|
\quad\mbox{ for any }v\in
H^1_0(\Omega)\cap H^2(\Omega),
\end{alignat}
which holds for convex polyhedral domains.

The quantity $\mathsf{H}_2(\vTheta)$ can be bounded by
\[
Ch \sup_{{\Theta\in \mathcal{C}^\infty_0(\Omega)}} \frac{1}{\|{\Theta}\|} \bigg(\int_0^T\|\nabla \vPsi\|\,\|\B_\alpha^*\nabla\vPsi\|\bigg)^{1/2}.
\]

Our next task is to obtain estimates of $\int_0^T\|\vPsi_t\|$ and $\int_0^T\|\nabla \vPsi\|\|\B_\alpha^*\nabla\vPsi\|$.\\

{\bf 5.2 A priori estimates for the dual solution} The estimates we need are gathered in the following result.
\begin{lemma}
\label{prop:parabolicregularity} For any $\Theta\in H^1_0(\Omega)$ and any $\delta\in(0,T)$, we have that
\begin{alignat*}{2}
\int_0^T\| \vPsi_t \| \le &\; \frac{C}{\alpha+1}\,\big( \sqrt{\ell(\delta)}\|\Theta\|+ \delta^{(\alpha+1)/2}\|\nabla\Theta\|\big),
\\
\int_0^T\| \nabla \vPsi \|\,\|\B_\alpha^*\nabla \vPsi\| \le &\; C\,\|\Theta\|\left(\ell(\delta) \|\Theta\|
+\,\frac{\delta^{(\alpha+1)/2}}{\alpha+1} \,\|\nabla\Theta\|\right),
\end{alignat*}
where $\ell(\delta)=\log(T/\delta).$ The constant $C$ is independent of $\Psi, T$ and $\alpha$.
\end{lemma}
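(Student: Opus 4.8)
The plan is to recast the backward dual problem \eqref{eq:dual} as a forward fractional subdiffusion problem and then to resolve it by separation of variables. First I would use \eqref{eq:dual-1} to eliminate $\vPhi=-\nabla\vPsi$ from \eqref{eq:dual-2}, obtaining $\vPsi_t+\B_\alpha^*\Delta\vPsi=0$ with $\vPsi(T)=\Theta$, and then reverse time by setting $w(t):=\vPsi(T-t)$. A direct computation from \eqref{Balpha*} gives the reflection identity $(\B_\alpha^*\vPsi)(t)=(\B_\alpha w)(T-t)$, so that $w$ solves the homogeneous problem $w_t+\B_\alpha(-\Delta)w=0$ on $\Omega\times(0,T)$ with $w=0$ on $\partial\Omega$ and $w(0)=\Theta$. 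Since the $L^2$-norms are invariant under $t\mapsto T-t$, it suffices to bound $\int_0^T\|w_t\|$ and $\int_0^T\|\nabla w\|\,\|\B_\alpha\nabla w\|$.

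Next I would expand in the Dirichlet eigenpairs $(\lambda_j,\phi_j)$ of $-\Delta$ and write $\Theta=\sum_j\theta_j\phi_j$. Each mode satisfies $w_j'+\lambda_j\B_\alpha w_j=0$, $w_j(0)=\theta_j$; taking the Laplace transform and using $\mathcal{L}[\B_\alpha w_j]=z^{-\alpha}\widehat{w}_j$ identifies $w_j(t)=\theta_j E_\mu(-\lambda_j t^\mu)$, where $\mu:=\alpha+1\in(0,1)$ and $E_\mu$ is the Mittag--Leffler function. From $\frac{d}{dt}E_\mu(-\lambda t^\mu)=-\lambda t^{\mu-1}E_{\mu,\mu}(-\lambda t^\mu)$ and the mode equation itself I obtain the representations $\|w_t(t)\|^2=\sum_j\theta_j^2\lambda_j^2 t^{2\mu-2}E_{\mu,\mu}(-\lambda_j t^\mu)^2$ and $\|\B_\alpha\nabla w(t)\|^2=\sum_j\theta_j^2\lambda_j t^{2\mu-2}E_{\mu,\mu}(-\lambda_j t^\mu)^2$, so that every quantity reduces to an explicit series in $E_\mu$ and $E_{\mu,\mu}$ evaluated at $\lambda_j t^\mu$.

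The heart of the argument is then the splitting $\int_0^T=\int_0^\delta+\int_\delta^T$ together with weighted Cauchy--Schwarz inequalities in time and the substitution $x=\lambda_j t^\mu$. On the far interval, the decay bounds $E_\mu(-x)\le C/(1+x)$ and $E_{\mu,\mu}(-x)\le C/(1+x)^2$ give the pointwise estimates $\|\nabla w(t)\|\le C\,t^{-\mu/2}\|\Theta\|$ and $\|\B_\alpha\nabla w(t)\|\le C\,t^{\mu/2-1}\|\Theta\|$, whose product integrates against $\int_\delta^T dt/t=\ell(\delta)$ to produce the $\ell(\delta)\|\Theta\|^2$ term of the second estimate; for the first estimate the same substitution yields $\int_\delta^T t\,\|w_t\|^2\le\frac{C}{\mu}\|\Theta\|^2$, and Cauchy--Schwarz with weight $1/t$ then produces the $\sqrt{\ell(\delta)}\,\|\Theta\|$ term. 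On the near interval I would use the monotonicity $\|\nabla w(t)\|\le\|\nabla\Theta\|$ (from $0\le E_\mu(-x)\le 1$) together with the weighted bounds $\int_0^\delta t^{1-\mu}\|w_t\|^2\le\frac{C}{\mu}\|\nabla\Theta\|^2$ and $\int_0^\delta t^{1-\mu}\|\B_\alpha\nabla w\|^2\le\frac{C}{\mu}\|\Theta\|^2$; pairing these with $\int_0^\delta t^{\mu-1}dt=\delta^\mu/\mu$ via Cauchy--Schwarz delivers the $\delta^{\mu/2}$ contributions, the factor $1/\mu=1/(\alpha+1)$ arising precisely from this integrable but singular weight.

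The step I expect to be the main obstacle is establishing the Mittag--Leffler decay bounds, above all the quadratic estimate $E_{\mu,\mu}(-x)\le C/(1+x)^2$, with a constant $C$ that is \emph{uniform in} $\mu\in(0,1)$; this uniformity is exactly what makes the final constants independent of $\alpha$, and the quadratic (rather than merely linear) decay is indispensable, since the linear bound would leave a spurious $\log(\lambda_j T^\mu)$ depending on the eigenvalue. For $E_\mu$ the complete-monotonicity bound $E_\mu(-x)\le 1/(1+x/\Gamma(1+\mu))\le 1/(1+x)$, valid because $\Gamma(1+\mu)\le 1$ on $(0,1)$, suffices; for $E_{\mu,\mu}$ I would extract the uniform quadratic bound from its integral representation, checking that the only genuine degeneration as $\alpha\downarrow-1$ is the explicit factor $1/(\alpha+1)$ coming from the weight $t^{\mu-1}$. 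The remaining points, namely the termwise differentiation of the eigenseries and the Laplace-transform identification of $w_j$, are routine once convergence of the series in the relevant norms is noted.
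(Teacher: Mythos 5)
Your proposal is correct and follows essentially the same route as the paper: reverse time to obtain a forward subdiffusion problem, represent the solution through the Dirichlet eigenexpansion and the Mittag--Leffler function $E_\mu$ with $\mu=\alpha+1$, split the time integral at distance $\delta$ from the final time, and combine the resulting pointwise and weighted $L^2$-in-time bounds via Cauchy--Schwarz with the logarithmic weight. The only difference is one of packaging: the paper introduces the auxiliary function $v=\Delta^{-1}\mathcal{R}\vPsi$, cites the pointwise regularity estimates from the literature, and obtains the weighted bound $\int_0^T t\|\Delta v_t\|^2\,dt\le C(1+\alpha)^{-2}\|\Theta\|^2$ by an integration by parts before invoking the eigenseries, whereas you derive all three ingredients directly from the series --- which works, and your identification of the uniform-in-$\mu$ quadratic decay of $E_{\mu,\mu}(-x)$ as the crux matches exactly the kernel bound $|G_{m,\mu}(t)|\le C\min\{\lambda_m t^{\mu-1},\lambda_m^{-2}t^{-2\mu-1}\}$ on which the paper's own argument rests.
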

\begin{proof}
First, we define the auxiliary function $v$: for each time $t\in [0,T]$,
\[
\Delta v(t):=\mathcal{R}\Psi(t) \quad\mbox{ in }\Omega \quad\mbox{ and }\quad v(t)|_{\partial \Omega}=0,
\]
 where $\mathcal{R}$ is the time-reversal operator for the interval $[0,T]$, that is, $\mathcal{R}\psi(t)=\psi(T-t)$.
For the moment, we assume the following properties of the function $v$:
\begin{alignat}{1}
 t^{(1-\alpha)/2}\| \Delta v_t(t)\|+\|\nabla(\Delta v(t))\| &\le C\,\min\{t^{-(\alpha+1)/2} \,\|\Theta\|,
\,\|\nabla\Theta\|\}\label{eq: estimate1},
\\
t^{-\alpha}\|v_t(t)\| + \|\Delta v(t)\|&\le C\,\|\Theta\|,\label{eq: estimate2}
\\
\label{eq: Estimate for t Delta v_t^2}\int_0^T t\|\Delta v_t\|^2\,dt & \le \frac{C}{(1+\alpha)^2}\|\Theta\|^2\,.
\end{alignat}
Using the relation $\Delta v(t)=\mathcal{R}\Psi(t)$ and the above  inequalities, we obtain
\begin{alignat*}{1} (T-t)^{(1-\alpha)/2}\| \vPsi_t(t)\| +\|\nabla\vPsi (t)\|
&\le C\,\min\{(T-t)^{-(\alpha+1)/2}\,\|\Theta\|, \|\nabla\Theta\|\}, \\
\|\B_\alpha^*\vPsi(t)\| &\le C\,(T-t)^\alpha\,\|\Theta\|
\\
 \int_0^T (T-t)\|\vPsi_t\|^2\,dt&\le  \frac{C}{(1+\alpha)^2}\|\Theta\|^2.
\end{alignat*}
This implies
\begin{alignat*}{1}
\|\B_\alpha^* \nabla\vPsi\|^2 &= -(\B_\alpha^*\Delta\vPsi, \B_\alpha^*\vPsi) = (\vPsi_t, \B_\alpha^*\vPsi) \le \|\vPsi_t\|\,\|
\B_\alpha^*\vPsi\| \le C\,(T-t)^{\alpha-1}\,\|\Theta\|^2,
\end{alignat*}
and so $ \|\nabla\vPsi\|\,\|\B_\alpha^* \nabla\vPsi\| \le
C\,\min\{(T-t)^{-1}\,\|\Theta\|^2,(T-t)^{(\alpha-1)/2}\,\|\Theta\|\,\|\nabla\Theta\|\}.$ Hence
\begin{alignat*}{1}
\int_0^T \|\vPsi_t\| &\le \int_0^{T-\delta} \|\vPsi_t\| +\int_{T-\delta}^T \|\vPsi_t\|
\\
&\le \sqrt{\log(T/\delta)} \left(\int_0^{T-\delta} (T-t)\,\|\vPsi_t\|^2\right)^{1/2} +C\,\int_{T-\delta}^T
(T-t)^{(\alpha-1)/2}\,\|\nabla\Theta\|
\\
&\le C\,\sqrt{\log(T/\delta)}\frac{\|\Theta\|}{\alpha+1} +C\,\frac{\delta^{(\alpha+1)/2}}{\alpha+1}\|\nabla\Theta\|,
\end{alignat*}
and
\begin{alignat*}{1}
\int_0^T \|\nabla\vPsi\|\,\|\B_\alpha^*\nabla\vPsi\| &\le \int_0^{T-\delta} \|\nabla\vPsi\|\,\|\B_\alpha^*\nabla\vPsi\| +\int_{T-\delta}^T
\|\nabla\vPsi\|\,\|\B_\alpha^*\nabla\vPsi\|
\\
&\le\,C\, \int_0^{T-\delta} (T-t)^{-1}\,\|\Theta\|^2 +C\,\int_{T-\delta}^T (T-t)^{(\alpha-1)/2}\,\|\Theta\|\,\|\nabla\Theta\|
\\
&\le C\,\log(T/\delta)\|\Theta\|^2 +C\,\frac{\delta^{(\alpha+1)/2}}{\alpha+1} \,\|\Theta\|\,\|\nabla\Theta\|\,.
\end{alignat*}
Therefore, the remaining task is to show the inequalities \eqref{eq: estimate1}, \eqref{eq: estimate2}, and \eqref{eq: Estimate for t Delta
v_t^2}. Using the fact that $\mathcal{R}\partial_t=-\partial_t\mathcal{R}$ and that $\mathcal{R}\B_\alpha^*=\B_\alpha\mathcal{R}$, we see that
\begin{equation*}
v_t-\B_\alpha \Delta v(t)=0 \;\mbox{ in }\Omega\times(0,T), \quad v=0\;\mbox{ on } {\partial \Omega}\times(0,T), ~\mbox{ and }\;
v(0)=\Delta^{-1}\Theta.
\end{equation*}
Thus, by \cite[Theorems 4.1 and 4.2]{McLean2010}, \eqref{eq: estimate1} and  \eqref{eq: estimate2} immediately follow. To prove inequality
 \eqref{eq: Estimate for t Delta v_t^2}, we use the identity
\[
\int_0^T t\|\Delta v_t\|^2\,dt=  t(\Delta v_t(t),\Delta v(t))\bigg|_0^T-\frac12 \|\Delta v(t)\|^2\bigg|_0^T -\int_0^T t(\Delta v_{tt},\Delta
v)\,dt,
\]
and the inequalities \eqref{eq: estimate1} and \eqref{eq: estimate2}, to get
\[\int_0^T t\|\Delta v_t\|^2\,dt
\le C\,\|\Theta\|^2  + \int_0^T |t(\Delta v_{tt},\Delta v)|\,dt.
\]
It remains to estimate the second term of the right-hand side. To do that, we first note that, since  the operator $-\Delta$ (with homogeneous
Dirichlet boundary conditions) has  a complete orthonormal eigensystem $\{\lambda_m,\phi_m\}_{m=1}^\infty$ ($\phi_m  \in H^1_0(\Omega)$ and
$0<\lambda_1\le\lambda_2\le\lambda_3\le\cdots$), one may show that the solution $v$  is given by the Duhamel formula
\begin{equation*}
 v(t)=\sum_{m=1}^\infty
        E_{\mu}(-\lambda_m t^{\mu})(v(0),\phi_m)\phi_m\quad{\rm with}~~\mu=\alpha+1,
\end{equation*}
where $E_{\mu}(t):=\sum_{p=0}^\infty\frac{t^p}{\Gamma(\mu p+1)}$, is the
 Mittag-Leffler function; see \cite{McLeanMustapha2007}. Thus,
 \begin{align*}
t(\Delta\,v_{tt},\Delta v)&= \sum_{m=1}^\infty G_{m,\mu}(t)
        \,(\Delta v(0),\phi_m)^2=\sum_{m=1}^\infty G_{m,\mu}(t)
        \,(\Theta ,\phi_m)^2,
\end{align*}
where $G_{m,\mu}(t):=t\,E_{\mu}(-\lambda_m t^\mu)\frac{d^2}{dt^2}(E_{\mu}(-\lambda_m t^\mu))$\,. Since, by the proof of Theorems 4.1 and 4.2 in
\cite{McLean2010}, we have that $ |G_{m,\mu}(t)| \le C\,\min\{\lambda_mt^{\mu-1},\lambda_m^{-2}t^{-2\mu-1}\}, $ we get
\begin{alignat*}{1}
\int_0^T |G_{m,\alpha}(t)|\,dt & \le C\lambda_m\int_0^{\lambda_m^{-1/\mu}}
t^{\mu-1}\,dt+C\lambda_m^{-2}\int_{\lambda_m^{-1/\mu}}^Tt^{-2\mu-1}\,dt\le
  \frac{C}{(\alpha+1)^2},
  \end{alignat*}
and therefore,
\[
\int_0^T |t(\Delta v_{tt},\Delta v)|\,dt \le \sum_{m=1}^\infty \int_0^T |G_{m,\mu}(t)|\,dt
        \,(\Theta ,\phi_m)^2 \le
\frac{C}{(\alpha+1)^2}\|\Theta\|^2\,.
\]
This completes the proof. $\quad \qed$
\end{proof}

{\bf 5.3 Compensating for the lack of regularity of $\Theta$} Note that the a priori estimates of Lemma \ref{prop:parabolicregularity} do use
the $H^1_0(\Omega)-$seminorm of $\Theta$ whereas the bounds of the quantities $\mathsf{H}_i(\Theta)$ can only use its $L^2(\Omega)-$norm. To
remedy this lack of regularity, we take advantage of the fact that $P_0\eu(T)$ lies in a finite dimensional space.

Let $\mathcal{T}_{h'}$ be a triangulation of $\Omega$ obtained by refining each of the simplexes of the triangulation $\mathcal{T}_{h}$, and let
$W^c_{h'}$ be the space of {\em continuous} functions which are polynomials of degree $k$ on each element of $\mathcal{T}_{h'}$. Finally let
$\mathrm{P}_{h'}$ be the $L^2$-projection from $W_h$ to $W^c_{h'}$. Then, we have the following result.

\begin{lemma} [{\cite[Appendix A.3]{ChabaudCockburn12}}]
\label{lemma:aux} For any triangulation $\mathcal{T}_{h}$ of $\Omega$, we can always find a refinement $\mathcal{T}_{h'}$ for which we have
\begin{alignat*}{2}
\|\nabla \mathrm{P}_{h'}\theta\| \le \frac{C_{k,d}}{\rho}\, \|\theta\|\quad\forall~ \theta\in W_h, \quad\mbox{ and } \quad \| \varepsilon \|\le
2\,\sup_{\theta\in W_h}\frac{( \varepsilon ,\mathrm{P}_{h'}\theta)}{\|\theta\|}\quad\forall~ \varepsilon \in W_h.
\end{alignat*}
Here the constant $C_{k,d}$ depends solely on the polynomial degree $k$ and the dimension $d$ of the spacial domain $\Omega$, and  ${\rho:=\min_{K\in\oh} \rho_K}$
where $\rho_K$ denotes the radius of the {largest} ball included in the simplex $K$.
\end{lemma}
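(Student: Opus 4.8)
The plan is to prove the two claimed inequalities separately, as they rely on different mechanisms: the first is an inverse-type estimate for the smoothing projection $\mathrm{P}_{h'}$, and the second is an approximation-stability statement that lets the continuous space $W^c_{h'}$ serve as a good test space for the discontinuous space $W_h$.

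For the first inequality, I would fix $\theta\in W_h$ and argue that $\mathrm{P}_{h'}\theta\in W^c_{h'}$ is a continuous piecewise polynomial, so a standard inverse inequality on the refined mesh $\mathcal{T}_{h'}$ gives $\|\nabla \mathrm{P}_{h'}\theta\|\le C\,(\min_K h'_K)^{-1}\|\mathrm{P}_{h'}\theta\|$, and then $\|\mathrm{P}_{h'}\theta\|\le\|\theta\|$ since $\mathrm{P}_{h'}$ is an $L^2$-projection. The key geometric input is that the refinement $\mathcal{T}_{h'}$ can be constructed so that its smallest mesh scale is comparable to $\rho=\min_{K\in\oh}\rho_K$; this is exactly where one gets the factor $1/\rho$ rather than $1/h$. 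So the first step is to \emph{describe the construction} of $\mathcal{T}_{h'}$ (subdividing each simplex $K$ into a fixed, shape-regular number of sub-simplexes whose inradii are bounded below by a constant multiple of $\rho_K$), and then the inverse estimate and the projection bound chain together, with $C_{k,d}$ absorbing the shape-regularity constant of the refinement and the inverse-inequality constant.

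For the second inequality, I would use the abstract fact that for any $\varepsilon\in W_h$ one has $\|\varepsilon\|=\sup_{\eta\in L^2}(\varepsilon,\eta)/\|\eta\|$, and restrict attention to testing against $\mathrm{P}_{h'}\theta$ for $\theta\in W_h$. Writing $(\varepsilon,\mathrm{P}_{h'}\theta)=(\mathrm{P}_{h'}\varepsilon,\theta)$ and choosing $\theta=\mathrm{P}_{h'}\varepsilon$ (which lies in $W^c_{h'}\subseteq W_h$ once the refinement is nested appropriately), the supremum is bounded below by $\|\mathrm{P}_{h'}\varepsilon\|^2/\|\mathrm{P}_{h'}\varepsilon\|=\|\mathrm{P}_{h'}\varepsilon\|$. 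The factor of $2$ then comes from establishing that $\mathrm{P}_{h'}$ is \emph{injective with a quantitative lower bound} on $W_h$, namely $\|\mathrm{P}_{h'}\varepsilon\|\ge\tfrac12\|\varepsilon\|$; equivalently, that passing to the continuous refined space loses at most half the $L^2$-mass. This is again a consequence of the explicit refinement: on each element $K$ the continuous piecewise-degree-$k$ functions on the sub-simplexes of $K$ approximate $\mathbb{P}_k(K)$ well enough that the projection norm is bounded below by a mesh-independent constant, which the construction arranges to be at least $1/2$.

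The main obstacle is the explicit construction of $\mathcal{T}_{h'}$ that simultaneously delivers (a) an inradius lower bound of order $\rho$ on the sub-simplexes, and (b) the uniform lower bound $\|\mathrm{P}_{h'}\varepsilon\|\ge\tfrac12\|\varepsilon\|$. Since this is cited verbatim from \cite[Appendix A.3]{ChabaudCockburn12}, I would simply invoke that reference for the combinatorial/geometric details rather than reconstruct them; the only $\alpha$-independent content needed here is that the same refinement works for every mesh $\mathcal{T}_h$, which is precisely what the cited appendix provides.
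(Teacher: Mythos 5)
First, a point of reference: the paper does not prove this lemma at all --- it is quoted, with proof, from \cite[Appendix A.3]{ChabaudCockburn12} --- so your decision to defer the combinatorial/geometric construction of $\mathcal{T}_{h'}$ to that reference is consistent with what the authors themselves do. Your sketch of the first inequality is also sound: a \emph{bounded} number of shape-regular subdivisions of each simplex $K$ keeps the inradii of the sub-simplexes bounded below by a fixed multiple of $\rho_K$, and the elementwise inverse inequality together with the $L^2$-stability $\|\mathrm{P}_{h'}\theta\|\le\|\theta\|$ then yields the factor $C_{k,d}/\rho$.

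The genuine gap is in your argument for the second inequality. You choose $\theta=\mathrm{P}_{h'}\varepsilon$ and justify admissibility by claiming $W^c_{h'}\subseteq W_h$ ``once the refinement is nested appropriately.'' This inclusion is false and cannot be arranged: on a coarse simplex $K$, an element of $W^c_{h'}$ is a continuous piecewise polynomial over the sub-simplexes of $K$, which is in general not a single polynomial of degree $k$ on $K$; no refinement makes the continuous fine space a subspace of the discontinuous coarse space (nor conversely, because functions in $W_h$ jump across coarse faces). Since the supremum in the lemma runs over $\theta\in W_h$, your test function is inadmissible. The repair is to test with $\theta=\varepsilon$ itself: because $\mathrm{P}_{h'}$ is the $L^2$-projection onto $W^c_{h'}$, one has $(\varepsilon,\mathrm{P}_{h'}\varepsilon)=\|\mathrm{P}_{h'}\varepsilon\|^2$, so the supremum is at least $\|\mathrm{P}_{h'}\varepsilon\|^2/\|\varepsilon\|$, and the stated factor $2$ follows once one shows $\|\mathrm{P}_{h'}\varepsilon\|^2\ge\tfrac12\|\varepsilon\|^2$, equivalently $\|\varepsilon-\mathrm{P}_{h'}\varepsilon\|\le\|\varepsilon\|/\sqrt{2}$ for all $\varepsilon\in W_h$. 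Note that this is a strictly stronger quantitative input than the bound $\|\mathrm{P}_{h'}\varepsilon\|\ge\tfrac12\|\varepsilon\|$ you propose, which through this (valid) route would only give a factor $4$. Establishing that uniform best-approximation bound --- with a number of subdivisions depending only on $k$ and $d$, so as not to destroy the $1/\rho$ scaling in the first inequality --- is precisely the nontrivial content of \cite[Appendix A.3]{ChabaudCockburn12}; it does not follow from the generic remarks in your closing paragraph, so the citation is carrying the entire weight of the second inequality.
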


Roughly speaking, the second inequality gives us an alternative manner to estimate the $L^2(\Omega)$-norm of $\varepsilon:=P_0\eu(T)$. Indeed,
it allows us to take $\Theta$ of the form $P_h'\theta$ only. The first inequality takes care of the lack of smoothness of $\Theta$ but at the
price of the appearance of the factor $\rho$ in the denominator. We can now
 modify the a priori inequalities of Lemma
\ref{prop:parabolicregularity} as follows.

\begin{lemma}
\label{coro:parabolicregularity} Let $(\vPhi,\vPsi)$ be the solution of the dual problem with $\vTheta:=\mathrm{P}_{h'}\theta$ where $\theta\in
W_h$ and $\mathrm{P}_{h'}$ satisfies Lemma \ref{lemma:aux}. Then
\begin{alignat*}{1}
\int_0^T\| \vPsi_t \| \le &\;\frac{C}{\alpha+1}\,\sqrt{\log{\kappa}}\; \|\theta\|\quad{\rm and}\quad \int_0^T\| \nabla\vPsi
\|\|\B_\alpha^*\nabla \vPsi\| \le \;\frac{C}{\alpha+1}\,
 {\log{\kappa}}\,\|\theta\|^2,
\end{alignat*}
where,  $\kappa > 1$ is the solution of $  \kappa^{\alpha+1}\log\kappa= C_{k,d}^2\,T^{\alpha+1}/\rho^2.$  Here ${\rho:=\min_{K\in\oh} \rho_K}$
and $\rho_K$ denotes the radius of the {largest} ball included in the simplex $K$.
\end{lemma}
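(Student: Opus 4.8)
The plan is to specialize the a priori estimates of Lemma~\ref{prop:parabolicregularity} to the datum $\vTheta=\mathrm{P}_{h'}\theta$ and then to trade the $H^1_0$-seminorm of that datum against the free parameter $\delta$ by an optimal choice. First I would observe that $\vTheta=\mathrm{P}_{h'}\theta$ is an admissible datum for Lemma~\ref{prop:parabolicregularity}, since it lies in $H^1_0(\Omega)$. The two norms of $\vTheta$ that appear on the right-hand sides are then controlled through Lemma~\ref{lemma:aux}: because $\mathrm{P}_{h'}$ is an $L^2$-projection it is a contraction, so $\|\mathrm{P}_{h'}\theta\|\le\|\theta\|$, while its gradient obeys $\|\nabla\mathrm{P}_{h'}\theta\|\le (C_{k,d}/\rho)\,\|\theta\|$. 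Inserting these two bounds into Lemma~\ref{prop:parabolicregularity} gives, for \emph{every} $\delta\in(0,T)$,
\[
\int_0^T\|\vPsi_t\|\le \frac{C}{\alpha+1}\,\|\theta\|\Big(\sqrt{\log(T/\delta)}+\tfrac{C_{k,d}}{\rho}\,\delta^{(\alpha+1)/2}\Big),
\]
together with the analogous bound $\int_0^T\|\nabla\vPsi\|\,\|\B_\alpha^*\nabla\vPsi\|\le C\,\|\theta\|^2\big(\log(T/\delta)+\tfrac{1}{\alpha+1}\tfrac{C_{k,d}}{\rho}\,\delta^{(\alpha+1)/2}\big)$.

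The decisive step is the choice $\delta=T/\kappa$, where $\kappa>1$ is the number defined by $\kappa^{\alpha+1}\log\kappa=C_{k,d}^2\,T^{\alpha+1}/\rho^2$. This $\kappa$ exists and is unique because the map $\kappa\mapsto\kappa^{\alpha+1}\log\kappa$ increases continuously from $0$ to $\infty$ on $(1,\infty)$, while the right-hand side is a fixed positive number. With this substitution one has $\log(T/\delta)=\log\kappa$, and the defining equation is arranged precisely so that $\tfrac{C_{k,d}}{\rho}\,\delta^{(\alpha+1)/2}=\tfrac{C_{k,d}}{\rho}\,(T/\kappa)^{(\alpha+1)/2}=\sqrt{\log\kappa}$. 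Hence the two summands in the first bracket coincide, each equal to $\sqrt{\log\kappa}$, and the first asserted estimate follows with a doubled constant.

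For the second estimate the same substitution turns the right-hand side into $C\,\|\theta\|^2\big(\log\kappa+\tfrac{1}{\alpha+1}\sqrt{\log\kappa}\big)$. Since $-1<\alpha<0$ forces $1/(\alpha+1)>1$, the first summand is dominated by $\tfrac{1}{\alpha+1}\log\kappa$; and because $\rho$ is a mesh-scale (hence small) quantity, the right-hand side $C_{k,d}^2T^{\alpha+1}/\rho^2$ is large, so the corresponding solution $\kappa$ is large, giving $\log\kappa\ge1$ and therefore $\sqrt{\log\kappa}\le\log\kappa$. Both summands are thus $\le\tfrac{1}{\alpha+1}\log\kappa$, which yields the claimed bound. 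I expect the only genuine subtlety to be the balancing argument itself: recognizing that the transcendental equation defining $\kappa$ is exactly what equalizes the ``interior'' contribution $\sqrt{\log(T/\delta)}$ and the ``boundary-layer'' contribution $\tfrac{C_{k,d}}{\rho}\delta^{(\alpha+1)/2}$ produced by the insufficient regularity of $\vTheta$. Everything else reduces to the contraction property of $\mathrm{P}_{h'}$, the gradient bound of Lemma~\ref{lemma:aux}, and elementary monotonicity bookkeeping.
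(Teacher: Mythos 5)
Your proposal is correct and follows essentially the same route as the paper: substitute $\vTheta=\mathrm{P}_{h'}\theta$ into Lemma~\ref{prop:parabolicregularity}, use the $L^2$-contraction and the gradient bound of Lemma~\ref{lemma:aux}, and pick $\delta=T/\kappa$ so that the defining equation of $\kappa$ equalizes the two terms. The only point you make explicit that the paper glosses over (calling the second estimate ``almost identical'') is the need for $\log\kappa\ge 1$ to absorb $\sqrt{\log\kappa}$ into $\log\kappa$, an assumption the paper itself also invokes only implicitly.
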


\begin{proof} We prove the first estimate; the proof of the second is almost
  identical. From the first inequality of  Lemma  \ref{prop:parabolicregularity}
  with $\vTheta:=\mathrm{P}_{h'}\theta$, the fact that $\mathrm{P}_{h'}$
is an $L^2$-projection, and the first inequality of Lemma \ref{lemma:aux}, we obtain
\begin{alignat*}{1}
\int_0^T\| \vPsi_t \| &\le\; \frac{C}{\alpha+1}\,\big( \sqrt{\log({T}/\delta)}+ \delta^{(\alpha+1)/2}\frac{C_{k,d}}{\rho}\,\big)\|\theta\|
= \frac{2\,C}{\alpha+1}\, \sqrt{\log(\kappa)}\|\theta\|,
\end{alignat*}
if we take $\delta:=T/\kappa$ and use the definition of $\kappa$. This completes the proof. $\quad\qed$
\end{proof}

{\bf 5.4 The  estimate of the postprocessed approximation} We can now insert the estimates of the previous corollary in the first estimate of
$\|P_0\eu(T)\|$, \eqref{firstestimate}, to
 obtain the superconvergence estimate we sought. Note that, since $\Omega$ is convex, we can use the elliptic
  regularity inequality \eqref{ellipticregularity}.

\begin{theorem}
\label{thm:super} Assume that $u\in\mathcal{C}^1(0,T;H^{k+2}(\Omega))$ and $\vq\in\mathcal{C}^1(0,T;{\bf H}^{k+1}(\Omega))$. Assume also that
$\tau^*_K$ and $1/\tau^{\max}_K$ are bounded by $\mathsf {C}$. Then, for $k\ge1$,  we have that
\begin{alignat*}{1}
\|(u-u^*_h)(T)\|&\le\, C_3\,\sqrt{\log{\kappa}}\,h^{k+2}\,.
\end{alignat*}
 where the constant $C_3$,
  only depends on ${C}$, $\alpha$,  $\|u\|_{\mathcal{C}^1(H^{k+2})}$, and on $\|\vq\|_{\mathcal{C}^1(H^{k+1})}$.
\end{theorem}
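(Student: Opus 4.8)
The plan is to begin from the elementwise postprocessing bound \eqref{ustar}, sum it over $K\in\oh$, and evaluate at $t=T$, obtaining
\[
\|(u-u^\star_h)(T)\| \le C\,h^{k+2}\,|u(T)|_{H^{k+2}} + \|P_0\eu(T)\| + C\,h\,\|\eq(T)\|.
\]
The first term is $O(h^{k+2})$ by the assumed $H^{k+2}$-regularity of $u$, and the third is $C\,h\,\|\eq(T)\|\le C\,h^{k+2}$ because $\|\eq(T)\|=O(h^{k+1})$ by the optimal flux estimate of Theorem \ref{thm:ee}. Thus everything reduces to proving $\|P_0\eu(T)\|\le C\,\sqrt{\log\kappa}\,h^{k+2}$, which is where the hypothesis $k\ge1$ and the fractional duality argument enter.

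The difficulty in estimating $\|P_0\eu(T)\|$ is that the naive duality identity $\|P_0\eu(T)\|=\sup_\Theta (P_0\eu(T),\Theta)/\|\Theta\|$ forces, through the dual a priori bounds of Lemma \ref{prop:parabolicregularity}, the appearance of $\|\nabla\Theta\|$, which is not controlled by $\|\Theta\|$ for a general test function. I would resolve this exactly as the preparatory lemmas prescribe: by the second inequality of Lemma \ref{lemma:aux} it suffices to estimate $(P_0\eu(T),\mathrm{P}_{h'}\theta)$ over $\theta\in W_h$, so I set the dual final datum $\Theta:=\mathrm{P}_{h'}\theta$. For this choice I use the representation of $(P_0\eu(T),\Theta)$ from Lemma \ref{lemma:duality3}, estimated termwise by property (ii) of Lemma \ref{appendix} precisely as in \eqref{firstestimate}, and then invoke the refined dual regularity of Lemma \ref{coro:parabolicregularity}.

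Concretely, the two dual factors are controlled, via the elliptic regularity \eqref{ellipticregularity} together with the approximation properties of $\boldsymbol\Pi^{\mbox{{\rm\tiny{BDM}}}}$, $\mathrm{I}_h$ and $\Pw$, by $C\,h\int_0^T\|\vPsi_t\|$ and $C\,h\,(\int_0^T\|\nabla\vPsi\|\,\|\B_\alpha^*\nabla\vPsi\|)^{1/2}$; Lemma \ref{coro:parabolicregularity} bounds each of these by $C\,h\,\sqrt{\log\kappa}\,\|\theta\|$. This gives
\[
|(P_0\eu(T),\mathrm{P}_{h'}\theta)| \le C\,h\,\sqrt{\log\kappa}\,\|\theta\|\big(\|\eq\|_{L^\infty(L^2)}+\|e_{\vq}\|_{L^\infty(L^2)}+\|\partial_t\eu\|_{-\alpha}+\|e_{u_t}\|_{-\alpha}\big),
\]
and, dividing by $\|\theta\|$, taking the supremum, and applying Lemma \ref{lemma:aux} once more, yields $\|P_0\eu(T)\|\le C\,h\,\sqrt{\log\kappa}\,(\cdots)$. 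It then remains to check that each of the four error quantities is $O(h^{k+1})$: $\|\eq\|_{L^\infty(L^2)}$ and $\|\partial_t\eu\|_{-\alpha}$ are $O(h^{k+1})$ since they are controlled on the left-hand sides of Lemmas \ref{coro:energy} and \ref{coro:energy2} (whose right-hand sides are $O(h^{k+1})$ by Theorem \ref{thm:proj} and Lemma \ref{lemma:time0}); $\|e_{\vq}\|_{L^\infty(L^2)}$ is $O(h^{k+1})$ directly by Theorem \ref{thm:proj}; and $\|e_{u_t}\|_{-\alpha}\le C\,\|e_{u_t}\|_{L^2(L^2)}$, because $\I_{-\alpha}$ has the integrable kernel $\omega_{-\alpha}$ and is therefore bounded on $L^2(0,T)$, with the latter $O(h^{k+1})$ by applying Theorem \ref{thm:proj} to $(\vq_t,u_t)$, since time-differentiation commutes with the time-independent projections. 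Collecting terms gives $\|P_0\eu(T)\|\le C\,\sqrt{\log\kappa}\,h^{k+2}$ and hence the theorem.

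I expect the genuinely hard content to lie not in this assembly but in the ingredients already isolated upstream, namely the uniform-in-time fractional dual estimates of Lemma \ref{prop:parabolicregularity} and their consequence Lemma \ref{coro:parabolicregularity}, which manufacture the logarithmic factor $\sqrt{\log\kappa}$, together with the discrete refinement device of Lemma \ref{lemma:aux}, which trades the uncontrollable $H^1_0$-seminorm of $\Theta$ for the factor $\rho^{-1}$ absorbed into $\kappa$. Within the proof of the theorem itself, the point that most needs care is the bookkeeping of the $\alpha$-dependent constants, so that the final $C_3$ remains bounded for $\alpha$ in any interval $[\alpha^*,0]$ with $\alpha^*>-1$ and the sole surviving mesh dependence is the advertised $\sqrt{\log\kappa}\,h^{k+2}$.
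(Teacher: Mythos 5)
Your proposal is correct and follows essentially the same route as the paper: reduce via \eqref{ustar} to estimating $\|P_0\eu(T)\|$, invoke the duality representation and \eqref{firstestimate} with $\vTheta=\mathrm{P}_{h'}\theta$ through Lemmas \ref{lemma:aux} and \ref{coro:parabolicregularity}, and finish with the error bounds of Theorem \ref{thm:ee} and its ingredients. Your explicit verification that each of the four error quantities (in particular $\|e_{u_t}\|_{-\alpha}$, via the $L^1$-kernel bound for $\I_{-\alpha}$) is $O(h^{k+1})$ merely fills in a step the paper leaves implicit.
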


Let us relate $\kappa$ to $T$ and the maximum diameter of the simplexes of the
mesh, $h$.  For  $\log\kappa>1$,
\[
  \kappa^{\alpha+1}<\,\kappa^{\alpha+1}\,\log\kappa= C_{k,d}^2\,T^{\alpha+1}/\rho^2\le C\,C_{k,d}^2\,T^{\alpha+1}/h^2,
\]
when the mesh is quasi-uniform. We then easily see that  $\log\kappa< C\,  \log(T
  h^{-2/(\alpha+1)})$  for  $\log\kappa>1$. Therefore,
$\sqrt{\log\kappa}\le   \max\{1, C\,  \sqrt{\log(T h^{-2/(\alpha+1)})}\}\,.$
\begin{proof}
From the first estimate of $\|P_0\eu(T)\|$, \eqref{firstestimate}, we have that
\begin{align*}
\|P_0\eu(T)\| \le& \; \mathsf{H}_1(\vTheta)\,(\|\eq\|_{L^\infty(L^2)}
    + \|e_\vq\|_{L^\infty(L^2)})
 + \mathsf{H}_2(\vTheta)\left(\|\partial_t \eu\|_{-\alpha}
    + \|e_{u_t}\|_{-\alpha}\right)
\\
\le& \; C\,h\,\frac{\sqrt{\log{\kappa}}}{\alpha+1}\,\Big(\|\eq\|_{L^\infty(L^2)}
    + \|e_\vq\|_{L^\infty(L^2)}
      +\|\partial_t \eu\|_{-\alpha}
    + \|e_{u_t}\|_{-\alpha}\Big)\,\|\theta\|,
\end{align*}
by the estimates of the dual solution of the previous lemma. Using these estimates in \eqref{ustar}, we obtain
\begin{alignat*}{1}
\|u-u^\star_h\|  \le&\; C\,h^{k+2}\,|u|_{H^{k+2}(\mathcal{T}_h)}+ \;  C\,h\,\frac{\sqrt{\log{\kappa}}}{\alpha+1}\,\Big(\|\eq\|_{L^\infty(L^2)}
    + \|e_\vq\|_{L^\infty(L^2)}\\
    & \qquad \qquad \qquad +\|\partial_t \eu\|_{-\alpha}
    + \|e_{u_t}\|_{-\alpha}\Big)
+C\,h\,\|\eq\|_{L^\infty(L^2)}.
\end{alignat*}
The result now follows by using the error estimates of Theorem \ref{thm:ee}. $\quad \qed$~~
\end{proof}


 \section{Summary and concluding remarks}\label{sec:extensions} {We have carried out the a priori error analysis of a semi-discrete
HDG method for the spatial discretization to problem \eqref{heatequation}. Assuming that the exact solution is sufficiently regular, we proved
optimal error estimates  of  the approximations to $u$ in the $L_\infty\bigr(0,T;L_2(\Omega)\bigr)$-norm and to $-\nabla u$
 in the $L_\infty\bigr(0,T;{\bf L}_2(\Omega)\bigr)$-norm over a regular triangular meshes.  Moreover, for  quasi-uniform meshes, by a simple elementwise postprocessing, we obtained a faster approximation for $u$ with a superconvergence rate.
 All the results obtained in this paper can be extended almost
word-by-word to other superconvergent HDG methods as well as to the mixed methods that fit the general formulation of the HDG methods; see
\cite{CockburnQiuShi}.

The devising of time-space fully discrete DG methods able to deal in an efficient manner with the memory term constitutes the subject of ongoing
research.}
\providecommand{\bysame}{\leavevmode\hbox to3em{\hrulefill}\thinspace} \providecommand{\MR}{\relax\ifhmode\unskip\space\fi MR }
\providecommand{\MRhref}[2]{%
  \href{http://www.ams.org/mathscinet-getitem?mr=#1}{#2}
} \providecommand{\href}[2]{#2}

\end{document}